\DeclareMathOperator{\Tran}{Tran}
\DeclareMathOperator{\Sub}{Sub}
\newcommand{\toO}[1]{\xrightarrow{#1}}
\DeclareMathOperator{\core}{core}
\DeclareMathOperator{\hull}{hull}
\newcommand{\cO}{\mathcal{O}}
\newcommand{\Cat}[1]{\operatorname{Cat}(#1)}
\numberwithin{equation}{section}
\newtheorem{theorem}[equation]{Theorem}
\newtheorem{lemma}[equation]{Lemma}
\newtheorem{proposition}[equation]{Proposition}
\newtheorem{example}[equation]{Example}
\newtheorem{corollary}[equation]{Corollary}
\theoremstyle{definition}
\newtheorem{definition}[equation]{Definition}
\newtheorem{notation}[equation]{Notation}
\newtheorem{remark}[equation]{Remark}
\begin{document}

\title{Counting compatible indexing systems for $C_{p^n}$}

\author[M.~A.~Hill]{Michael A. Hill}
\address{UCLA Department of Mathematics, Los Angeles, CA}
\email{mikehill@math.ucla.edu}

\author[J.~Meng]{Jiayun Meng}
\address{Department of Mathematics, University of Texas, Austin, TX}
\email{jiayun@utexas.edu}

\author[N.~Li]{Nan Li}
\address{Department of Mathematics, University of Minnesota, Minneapolis, MN}
\email{li002843@umn.edu}

\thanks{This material is based upon work supported by the National Science Foundation under Grant No. 2105019}

\begin{abstract}
    We count the number of compatible pairs of indexing systems for the cyclic group \(C_{p^n}\). Building on work of Balchin--Barnes--Roitzheim, we show that this sequence of natural numbers is another family of Fuss--Catalan numbers. We count this two different ways: showing how the conditions of compatibility give natural recursive formulas for the number of admissible sets and using an enumeration of ways to extend indexing systems by conceptually simpler pieces.
\end{abstract}

\maketitle

\section{Introduction}
Recent work in equivariant algebra has studied the beautiful variety of different multiplicative structures that can arise when we mix in the action of a finite group. Blumberg--Hill showed that this is a fundamentally combinatorial structure \cite{BHNinfty}. The various multiplicative norms or additive transfers are entirely governed by certain particularly well-behaved subcategories of finite $G$-sets.

This classification was reformulated in independent work of Balchin--Barnes--Roitzheim and of Rubin, where they further underscored the combinatorial structure by showing the norms and transfers are encoded in ``transfer systems'', certain refinements of the poset of subgroups of $G$ under inclusion \cite{BBR} \cite{Rubin21}. 

\begin{definition}[{\cite[Definition 3.4]{Rubin21}, \cite[Definition 7]{BBR}}]
    A transfer system for a finite group \(G\) is a partial order \(\to\) relation on the set \(\Sub(G)\) of subsets of \(G\) such that
\begin{enumerate}
    \item if \(K\to H\), then \(K\leq H\) (so this is a ``weak subposet''),
    \item if \(K\to H\) and \(g\in G\), then \(gKg^{-1}\to gHg^{-1}\), and
    \item if \(K\to H\) and \(J\leq H\), then \(K\cap J\to J\).
\end{enumerate}
Let \(\Tran(G)\) denote the set of transfer systems for \(G\).
\end{definition}

The set of all transfer systems for \(G\) itself has a partial order: we say \(\cO\leq\cO'\) if the identity map is order-preserving. 

\begin{example}\label{exam:Cpn}
    For \(G=C_{p^n}\), the subgroup lattice is order isomorphic to the linear order
    \[
        \{1\leq 2\dots\leq n+1\}=[n+1].
    \]
    The conjugation condition is always satisfied, and the restriction condition here can be rephrased as saying that if \(C_{p^i}\to C_{p^j}\) and \(i\leq k\leq j\), then \(C_{p^i}\to C_{p^k}\).
\end{example}

Balchin--Barnes--Roitzheim showed that the poset of transfer systems for \(C_{p^n}\) is order isomorphic to the Tamari lattice \cite[Theorem 25]{BBR}. 

In this paper, we will focus on the groups \(C_{p^n}\). Example~\ref{exam:Cpn} stresses that we could equivalently look at ``transfer systems for the poset \([n+1]\)'' in the sense of Franchere--Ormsby--Osorno--Qin--Waugh.

\begin{definition}[{\cite[Definition 4.1]{FOOQW}}]
    A transfer system on 
    \[
        [n]=\{1\leq \dots\leq n\}
    \]
    is a weak sub-poset of \([n]\) with partial order \(\to\) that contains all the elements and which satsifies the ``restriction condition'':
    if \(i\to j\) and \(i\leq k\leq j\), then \(i\to k\).
\end{definition}

It is helpful to view these as a graded set in which we allow \(n\) to vary, as this will help encode certain natural operations.

\begin{definition}\label{defn:Tn}
    For each \(n\in\mathbb N\), let
    \[
        T_{n}=\begin{cases}
            \Tran([n]) & n\geq 1\\
            \{\emptyset\} & n=0.
        \end{cases}
    \]
\end{definition}

Balchin--Barnes--Roizheim counted the number of transfer systems for \(C_{p^n}\):
\begin{theorem}[{\cite[Theorem 20]{BBR}}]
    For \(n\), we have 
    \[
        |T_n|=\frac{1}{2n+1}\binom{2n+1}{n}=\Cat{n}.
    \]
\end{theorem}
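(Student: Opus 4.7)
I would prove this by establishing the Catalan recursion $|T_n| = \sum_{k=0}^{n-1}|T_k|\,|T_{n-1-k}|$ directly from the structure of transfer systems on $[n]$, decomposing each transfer system along the element $1$. Since $|T_0|=1$, this recursion together with uniqueness of the Catalan sequence will give $|T_n|=\operatorname{Cat}(n)$ by induction.

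The key decomposition: for a transfer system $R\in T_n$ with $n\geq 1$, let $m=m(R):=\max\{j : 1\to j \text{ in } R\}$, which is well-defined since $1\to 1$. By the restriction condition, $1\to j$ for every $1\leq j\leq m$. I claim that $R$ contains no relation $i\to j$ with $i\leq m<j$. Indeed, if such a relation existed, then since $i\leq m$ we have $1\to i$; combining with $i\to j$ by transitivity of the partial order $\to$ yields $1\to j$, contradicting the maximality of $m$. Hence $R$ decomposes cleanly as the disjoint union of its restriction $R_1$ to $\{1,\dots,m\}$ and its restriction $R_2$ to $\{m+1,\dots,n\}\cong [n-m]$, with no relations across the cut. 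Conversely, any such pair $(R_1,R_2)$ with $R_1\in T_m$ satisfying $1\to m$ and $R_2\in T_{n-m}$ reassembles to a transfer system in $T_n$ with $m$-invariant equal to $m$.

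Next I would show that transfer systems $S$ on $[m]$ with $1\to m$ are in bijection with $T_{m-1}$, by restricting to $\{2,\dots,m\}$. Given any $R'\in T_{m-1}$, reindexed on $\{2,\dots,m\}$, one appends the relations $1\to j$ for all $j\in[m]$; one checks the resulting $S$ still satisfies the axioms. The restriction condition for $1\to j$ is automatic since $1$ is below everything; transitivity holds because $1$ is only a source. The restriction condition for $i\to j$ with $i\geq 2$ holds inside $R'$. This bijection, combined with the decomposition, yields
\[
|T_n|=\sum_{m=1}^{n}|T_{m-1}|\cdot|T_{n-m}|=\sum_{k=0}^{n-1}|T_k|\cdot|T_{n-1-k}|,
\]
which together with $|T_0|=|T_1|=1$ uniquely determines $|T_n|=\operatorname{Cat}(n)$.

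The main obstacle is the ``no crossings'' claim: verifying that no $i\to j$ with $i\leq m<j$ can appear. The proof is short but it is the one place the interaction between the partial-order axiom (transitivity) and the restriction condition is used in an essential way; everything else is unpacking definitions. Everything after that reduces to the standard Catalan recursion, and the small-$n$ check ($|T_1|=1$, $|T_2|=2$, $|T_3|=5$) serves as a sanity test for the bookkeeping.
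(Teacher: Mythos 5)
Your argument is correct, and it is a complete, self-contained proof of a statement that the paper only cites from Balchin--Barnes--Roitzheim. The core of your proof --- split $R$ at $m=\max\{j: 1\to j\}$, observe that restriction plus transitivity forbid any arrow crossing the cut, and identify the wrapped left block with an element of $T_{m-1}$ by deleting the element $1$ --- is precisely the mirror image of the circle-dot decomposition that the paper does import as Theorem~\ref{thm:BBRDecomp} and Corollary~\ref{cor:FullDecomp}: there one writes $\cO=\cO_L\oplus w(\cO_R)$ with the wrapped summand split off at the \emph{right} end (at the maximal $k$ such that $k\to n$ fails, roughly), whereas you split off a wrapped summand at the \emph{left} end using the reach of the element $1$. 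Both decompositions are unique, both use exactly the same two axioms in the same way (restriction to fill in $1\to k$ for $k\leq m$, transitivity to rule out crossings), and both produce the bijection $T_n\cong\coprod_{m=1}^{n}T_{m-1}\times T_{n-m}$ and hence the Catalan recursion. Your version has the small advantage that the invariant $m(R)$ is defined by a one-line formula; the paper's right-handed convention is chosen because it interacts cleanly with the filtration by powers of $w$ used in Section~\ref{sec:Recurr}. The only steps you wave at ("one checks the resulting $S$ still satisfies the axioms", reassembly of the pair) are exactly the paper's concatenation proposition and the well-definedness of $w$, both routine, so nothing essential is missing.
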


The numbers ``\(\Cat{n}\)'' are the Catalan numbers. These are ubiquitous in combinatorics, parameterizing structructures from binary rooted trees to Dyck paths. This sequence fits into a bivariant family of sequences.

\begin{definition}
The \emph{Fuss--Catalan numbers} are defined by 
\[
A_n(p,r)=\frac{r}{np+r} \binom{np+r}{n}
\] 
for non-negative $n$ and positive $p$ and $r$.
\end{definition}
The Catalan numbers arise here:
\[
    \Cat{n}=A_n(2,1).
\]
In this paper, we show the next term of the sequence is also related to equivariant algebra. 

Blumberg--Hill studied what kind of compatibility conditions arise if we allow both the additive transfers and multiplicative norms to each be structured by various transfer systems \cite{BHBiIncomplete}. As a slogan, ``the presence of certain multiplicative norms forces some additive transfers''. These conditions were simplified by Chan who gave a definition internal to transfer systems \cite{Chan}. When the conditions are satisfied, we say that \((\cO_a,\cO_m)\) is compatible. Counting these for \(C_{p^n}\) is the main result of this paper.
\begin{theorem}
    For \(C_{p^{n-1}}\), there are
    \[
        A_n(3,1)
    \]
    compatible pairs of transfer systems.
\end{theorem}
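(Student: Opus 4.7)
The plan is to establish a recursion $N_n = \sum_{i+j+k=n-1} N_iN_jN_k$ (with $N_0 = 1$) for the number $N_n$ of compatible pairs of transfer systems on $[n]$, and then observe that this is the standard functional equation $N(x) = 1 + xN(x)^3$ characterizing the Fuss--Catalan generating function whose coefficients are $A_n(3,1)$.

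First, I would recall Chan's explicit characterization of compatibility from \cite{Chan} specialized to the linear poset $[n]$. Compatibility implies $\mathcal{O}_m\subseteq\mathcal{O}_a$ as binary relations, so every multiplicative arrow is also an additive arrow; I would spell out the residual local condition each multiplicative arrow imposes on the additive system.

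Second, given a compatible pair $(\mathcal{O}_a,\mathcal{O}_m)$ on $[n]$, introduce two cut points: let $a$ be the largest index with $1$ related to $a$ in $\mathcal{O}_a$, and $m$ the largest with $1$ related to $m$ in $\mathcal{O}_m$. The inclusion forces $1\le m\le a\le n$, and the restriction axiom forces the ``fans'' $1 \to i$ in $\mathcal{O}_a$ for all $i\le a$ and in $\mathcal{O}_m$ for all $i\le m$. The structural claim is that no arrow of $\mathcal{O}_a$ or $\mathcal{O}_m$ crosses between the three intervals $[2,m]$, $[m+1,a]$, $[a+1,n]$, except through $1$ via these fans.

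Third, I would verify that the restriction of $(\mathcal{O}_a,\mathcal{O}_m)$ to each of the three intervals is a compatible pair of transfer systems on an interval of size $m-1$, $a-m$, and $n-a$ respectively, and that conversely any triple of such compatible pairs, together with the cut data $(m,a)$, uniquely reassembles into a compatible pair on $[n]$. Compatibility of the assembled pair reduces to compatibility on each piece because the only cross-block arrows are the forced $1$-fans, which satisfy Chan's condition automatically. This bijection yields the recursion
\begin{equation*}
N_n \;=\; \sum_{\substack{i,j,k\ge 0\\ i+j+k = n-1}} N_i\,N_j\,N_k,\qquad N_0 = 1,
\end{equation*}
equivalently $N(x) = 1 + xN(x)^3$, the Fuss--Catalan equation for $p=3$. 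Uniqueness of the power-series fixed point then gives $N_n = A_n(3,1)$.

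The main obstacle is the ``no-crossing'' claim and the gluing step: showing that maximality of $a$ and $m$ together with Chan's compatibility condition truly prevents any further arrows between the three blocks, and that no hidden compatibility constraint couples the pieces beyond what the $1$-fans absorb. A naive decomposition using only the $\mathcal{O}_a$-anchor gives two blocks and merely recovers the Catalan count $\Cat{n}$ of transfer systems; extracting a genuinely three-part decomposition by also exploiting the $\mathcal{O}_m$-anchor is exactly what upgrades the answer to $A_n(3,1)$, and this is precisely where Chan's pair-level condition (rather than the single-system axioms) must do nontrivial work.
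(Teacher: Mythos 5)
Your reduction to the functional equation $N(x)=1+xN(x)^3$ is the right target, and the anchors are sound as far as they go: compatibility does give $\cO_m\le\cO_a$, hence $m\le a$, and each of $\cO_a$, $\cO_m$ individually splits at its own anchor. But the central structural claim --- that the pair decomposes into three \emph{independent} compatible pairs on $[2,m]$, $[m+1,a]$, $[a+1,n]$ --- is false, and the proposed bijection already fails at $n=3$. Two distinct things go wrong. First, $\cO_a$ need not break at $m$: maximality of $a$ only prevents additive arrows from crossing $a$, so additive arrows from $[2,m]$ into $[m+1,a]$ can occur and are not recorded by your data. Concretely, take $n=3$, $\cO_m=\{1\to 2\}$, and either $\cO_a=\{1\to2,\,1\to3\}$ or $\cO_a=\{1\to2,\,1\to3,\,2\to3\}$; both pairs are compatible, both have $(m,a)=(2,3)$, and both restrict trivially to the singleton blocks $\{2\}$ and $\{3\}$, so your map is $2$-to-$1$ on a cell the recursion requires to have size $N_1N_1N_0=1$. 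Second, the $1$-fans do \emph{not} satisfy Chan's condition automatically: $1\xrightarrow{m} j$ for all $j\le m$ forces $k\xrightarrow{a} j$ for all $k\le j\le m$, i.e.\ $\cO_a$ restricted to $[1,m]$ must be complete. This over-determines $\cO_a$ on the first block and makes reassembly non-surjective: with $n=3$ and $(m,a)=(3,3)$, the restricted pair $(\text{trivial},\text{trivial})$ on $[2,3]$ is never realized, so that cell has $2$ elements rather than the predicted $N_2=3$. The grand totals still agree at $n=3$ only because the recursion is numerically true; your decomposition does not realize it.

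For comparison, the paper never attempts a three-fold split. One proof anchors at the \emph{top} of $\cO_a$ via the Balchin--Barnes--Roitzheim decomposition $\cO_a=\cO_L\oplus w(\cO_R)$ and uses the fact that a missing additive arrow $k\to(k+1)$ forces $\cO_m$ to split at $k$ (Corollary~\ref{cor:CompatiblesBreak}); this yields a two-term recursion $d(n,i)=d(n,i+1)+\sum_j d(n-j,i)\,d(j,1)$ that is then solved with Fuss--Catalan identities. The other proof fixes $\core(\cO_a)$, notes that the compatible $\cO_m$ are exactly the transfer systems refining that saturated core (a product of Catalan numbers), and counts transfer systems with fixed core via Catalan tuples. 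If you want a direct ternary-tree bijection, you must build in the forced completeness of $\cO_a$ on $[1,m]$ and the additive arrows crossing $m$; the naive independence of the three blocks is precisely what fails.
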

We present two proofs of this theorem in Sections~\ref{sec:Recurr} and \ref{sec:Extensions}. Each of which underscores a different combinatorial feature of the number of compatible pairs. The mathematics in this paper arose from an REU project in Summer 2021. The two junior coauthors each came up with a distinct solution to this counting problem, so we include here both of their solutions. A third solution by Henry Ma will appear separately.

\subsection*{Acknowledgements}

The authors thank Bridget Tenner, Kyle Ormsby, Andrew Blumberg, and Angelica Osorno for helpful conversations during this. 

The authors would like to thank the Hausdorff Research Institute for Mathematics for the hospitality in the context of the Trimester program Spectral Methods in Algebra, Geometry, and Topology, funded by the Deutsche Forschungsgemeinschaft (DFG, German Research Foundation) under Germany’s Excellence Strategy – EXC-2047/1 – 390685813.

\section{Operations on Transfer Systems}
\subsection{Concatination}

A key piece of structure on transfer systems for \([n]\) is the ability to ``concatinate'' a transfer system for \([k]\) and one for \([n-k]\) to build one for \([n]\). This makes computations with the entire graded set easier to understand.

\begin{definition}
    Let \(\cO_L\) be a transfer system for \([k]\) and \(\cO_R\) be a transfer system for \([n-k]\). Then we define a relation  \(\cO=\cO_L\oplus\cO_R\) on \([n]\) by saying \(i\toO{\cO}j\) if and only if 
    \begin{enumerate}
        \item \(j\leq k\) and \(i\toO{\cO_L} j\) or
        \item \(i\geq (k+1)\) and \((i-k)\toO{\cO_R} (j-k)\).
    \end{enumerate}
    We call \(\cO\) the {\emph{direct sum}} or {\emph{concatination}} of \(\cO_L\) and \(\cO_R\).
\end{definition}

\begin{remark}
    As a poset, this is just the disjoint union of the two posets \(\cO_L\) and \(\cO_R\). The additional data is the map to \([n]\) in this case.
\end{remark}

\begin{proposition}
    The concatination of transfer systems is a transfer system.
\end{proposition}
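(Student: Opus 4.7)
The plan is to verify the three defining properties of a transfer system on $[n]$ for $\cO=\cO_L\oplus\cO_R$: reflexivity (``contains all elements''), that $\to$ is a partial order refining $\leq$, and the restriction condition. The key structural observation making all the verifications painless is that the two clauses defining $\cO$ are \emph{disjoint} in their target range: clause (1) only produces arrows with both endpoints in $\{1,\dots,k\}$, while clause (2) only produces arrows with both endpoints in $\{k+1,\dots,n\}$. So every arrow in $\cO$ comes from exactly one of the two ``blocks''.

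First I would dispose of reflexivity: for $i\leq k$ apply reflexivity of $\cO_L$, for $i\geq k+1$ apply reflexivity of $\cO_R$, and in either case $i\toO{\cO}i$. The fact that $i\toO{\cO}j$ implies $i\leq j$ is immediate from the corresponding property in $\cO_L$ and $\cO_R$, and antisymmetry is then automatic.

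For transitivity, suppose $i\toO{\cO}j$ and $j\toO{\cO}l$. The mixed cases are ruled out by the block observation above: if $i\toO{\cO}j$ came from $\cO_L$ then $j\leq k$, so the arrow $j\toO{\cO}l$ cannot come from $\cO_R$ (which would force $j\geq k+1$); and symmetrically if $i\toO{\cO}j$ came from $\cO_R$ then $j\geq k+1$, blocking $j\toO{\cO}l$ from coming from $\cO_L$. Hence either both arrows come from $\cO_L$, in which case $\cO_L$-transitivity gives $i\toO{\cO_L}l$, or both come from $\cO_R$, in which case $\cO_R$-transitivity applied to the shifted indices gives $(i-k)\toO{\cO_R}(l-k)$. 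In either case $i\toO{\cO}l$.

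Finally, for restriction, suppose $i\toO{\cO}j$ and $i\leq l\leq j$. If the arrow came from $\cO_L$, then $j\leq k$, so $l\leq k$ as well, and the restriction condition for $\cO_L$ applied to $i\toO{\cO_L}j$ with $i\leq l\leq j$ yields $i\toO{\cO_L}l$, hence $i\toO{\cO}l$. If the arrow came from $\cO_R$, then $i\geq k+1$ so $l\geq k+1$, and applying the restriction condition for $\cO_R$ to $(i-k)\toO{\cO_R}(j-k)$ with $(i-k)\leq(l-k)\leq(j-k)$ gives $(i-k)\toO{\cO_R}(l-k)$, hence $i\toO{\cO}l$. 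There isn't really a hard step here; the only thing to be careful about is the block observation in transitivity, which is where one could otherwise trip up.
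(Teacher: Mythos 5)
Your proof is correct and follows essentially the same route as the paper's: the arrows of $\cO_L\oplus\cO_R$ split disjointly into the two blocks (this is the ``disjoint union of posets'' observation), so the poset axioms and the restriction condition each reduce to the corresponding property of $\cO_L$ or $\cO_R$ alone. The only difference is cosmetic: the paper invokes the stronger subgroup-lattice form of restriction (restricting $i+k\to j+k$ along some $m\leq k$, which yields the trivial arrow $m\to m$), a case that is vacuous under the interval form of the condition you, quite legitimately, work with.
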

\begin{proof}
    The definition of \(\cO\) is that of the disjoint union of posets, so \(\cO\) is a poset, and it visibly maps to the usual inclusions. 
    
    We need only check the restriction condition, and here, it suffices to check the restriction of \(i+k\to j+k\) along \(m\leq j+k\) for some \(m\leq k\). In this case, both the source and target are \(m\).
\end{proof}

The direct sum is a graded operation here:
\[
    T_n\times T_m\xrightarrow{\oplus} T_{n+m}.
\]
We extend this to \(T_0\) by declaring 
\[
    \emptyset\oplus \cO=\cO\oplus\emptyset=\cO
\] 
for any transfer system \(\cO\) or the empty set. 

\begin{figure}[ht]
    \centering
    \begin{tikzpicture}
\foreach \x in {1,...,4} {
    \draw[circle,fill] (\x,0)circle[radius=1mm]node[below]{$\x$};
};
\foreach \x in {5,6,7} {
    \draw[circle,fill] (\x,0)circle[radius=1mm]node[below]{$\x$};
};
\foreach \x in {1,2,3,5}{
    \draw(\x,0) to (\x+1,0);
};
\foreach \x in {1,5} {
    \draw(\x,0) to[bend left=60] (\x+2,0);
};
\draw(1,0) to [bend left=60] (4,0);
\draw(2,0) to [bend right=60] (4,0);
\draw [decorate,decoration={brace,amplitude=5pt,mirror,raise=4ex}]
  (1,0) -- (4,0) node[midway,yshift=-3em]{$\cO_L$};
\draw [decorate,decoration={brace,amplitude=5pt,mirror,raise=4ex}]
  (5,0) -- (7,0) node[midway,yshift=-3em]{$\cO_R$};
\draw [decorate,decoration={brace,amplitude=5pt,mirror,raise=4ex}]
  (1,-1) -- (7,-1) node[midway,yshift=-3em]{$\cO_L\oplus\cO_R$};
    \end{tikzpicture}
    \caption{Example of concatination of \(\cO_L\) and \(\cO_{R}\)}
    \label{fig:Concatination}
\end{figure}

\subsection{Restriction}

Given a transfer system for \([n]\), we have two natural ways to build transfer systems for smaller natural numbers. These both arise from the inclusion of subposets.

\begin{definition}
    If \(k\leq n\), then let
    \[
        \iota\colon [k]\to [n]
    \]
    be the map sending \(i\to i\), and let 
    \[
        \phi\colon [n-k]\to [n]
    \]
    be the map sending \(i\) to \(i+k\).
\end{definition}

We can restrict a transfer system on \([n]\) along either of these inclusions.

\begin{definition}
    Let \(\cO\) be a transfer system for \([n]\), and let \(k\leq n\). 
    
    Let \(i_{{k}}^\ast\cO\) be defined by saying for all \(i\leq j\leq k\),
    \[
        i\toO{i_{{k}}^\ast\cO} j \text{ if and only if }
        i\toO{\cO}j.
    \]
    
    Let \(\Phi^{k}\cO\) be defined by saying for all \(i\leq j\leq (n-k)\),
    \[
        i\toO{\Phi^{k}\cO} j\text{ if and only if }
        {(i+k)}\toO{\cO} {(j+k)}.
    \]
\end{definition}

\begin{proposition}
    For a transfer system on \(\cO\),  \(i_{k}^{\ast}\cO\) is a transfer system on \([k]\) and \(\Phi^{k}\cO\) is a transfer system on \([n-k]\).
\end{proposition}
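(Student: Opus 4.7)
The plan is to verify in each case the two axioms from the definition of a transfer system for \([m]\): being a partial order that contains all elements (and refines \(\leq\)), and satisfying the restriction condition. Since both \(\iota\) and \(\phi\) are order-preserving inclusions of sub-posets of \([n]\), and both definitions are set up so that an arrow in the restricted system is literally an arrow in \(\cO\) between images of elements, most of the work amounts to chasing the definitions.

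For \(i_k^\ast\cO\), I would first observe that reflexivity, antisymmetry, and transitivity are immediate from the fact that, by definition, \(i\toO{i_k^\ast\cO} j\) iff \(i\toO{\cO} j\) for \(i,j\in[k]\); since \(\cO\) is a partial order on \([n]\) refining \(\leq\), the restriction to \([k]\) refines \(\leq\) on \([k]\). For the restriction condition, suppose \(i\toO{i_k^\ast\cO} j\) and \(i\leq m\leq j\). Then \(m\leq j\leq k\), so \(m\in[k]\), and \(i\toO{\cO}j\) in \([n]\), whence the restriction axiom for \(\cO\) gives \(i\toO{\cO} m\); by definition this says \(i\toO{i_k^\ast\cO} m\).

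For \(\Phi^k\cO\), the argument is essentially the same after a translation. Reflexivity, antisymmetry, and transitivity for \(\Phi^k\cO\) on \([n-k]\) all follow from the corresponding properties of \(\cO\) on the image subset \(\{k+1,\ldots,n\}\subseteq [n]\), since the map \(i\mapsto i+k\) is an order isomorphism onto its image. For the restriction condition, if \(i\toO{\Phi^k\cO} j\) with \(i\leq m\leq j\) in \([n-k]\), then \((i+k)\toO{\cO}(j+k)\) and \((i+k)\leq (m+k)\leq (j+k)\); applying the restriction axiom for \(\cO\) yields \((i+k)\toO{\cO}(m+k)\), which unwinds to \(i\toO{\Phi^k\cO} m\).

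There is no real obstacle here: this is a bookkeeping verification, and the only point requiring even minor care is making sure the intermediate index \(m\) lies in the correct sub-poset before invoking the restriction condition for \(\cO\), which is automatic in both cases since \(m\leq j\) and \(j\) lies in the sub-poset. I would present the two halves in parallel and keep the proof short.
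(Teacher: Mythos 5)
Your argument is correct and matches the paper's proof, which likewise notes that the constructions give wide subposets and that the restriction condition follows because \(\iota\) and \(\phi\) are interval inclusions (your observation that the intermediate index automatically lies in the sub-poset is exactly what ``interval inclusion'' is doing there). You have simply written out the bookkeeping the paper leaves to the reader.
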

\begin{proof}
    The construction gives wide, subposets of \([k]\) and \([n-k]\) respectively, by observation. Since the two inclusions \(\iota\) and \(\phi\) are interval inclusions, the restriction condition is easily checked.
\end{proof}

The two restrictions can be visualized as simply throwing away any transfers that start or end outside of the given range. An example picture of this is shown in Figure~\ref{fig:Restriction and GFP}.

\begin{remark}
    The map \(i_k^\ast\) is induced by the inclusion of the subgroup lattice corresponding to the inclusion of \(C_{p^k}\) into \(C_{p^n}\). Topologically, this models the restriction to \(C_{p^k}\). The map \(\Phi^{k}\) is induced by the inclusion of subgroup lattices corresponding to the quotient map from \(C_{p^n}\) to \(C_{p^{n-k}}\). Topologically, this models the fixed points.
\end{remark}

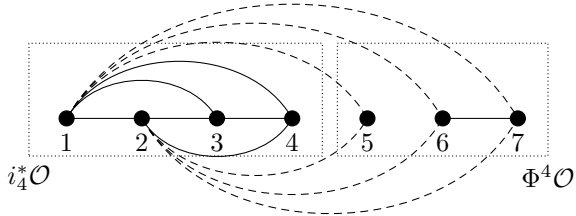
\begin{figure}[ht]
    \centering
    \begin{tikzpicture}
\foreach \x in {1,...,4} {
    \draw[circle,fill] (\x,0)circle[radius=1mm]node[below]{$\x$};
};
\foreach \x in {5,6,7} {
    \draw[circle,fill] (\x,0)circle[radius=1mm]node[below]{$\x$};
};
\foreach \x in {1,2,3,6}{
    \draw(\x,0) to (\x+1,0);
};
\foreach \x in {1} {
    \draw(\x,0) to[bend left=60] (\x+2,0);
};
\draw(1,0) to [bend left=60] (4,0);
\draw(2,0) to [bend right=60] (4,0);
\foreach \x in {5,6,7}{
    \draw[densely dashed](1,0) to [bend left=60] (\x,0);
    \draw[densely dashed](2,0) to [bend right=60] (\x,0);
};
\draw[densely dotted] (0.5,-.5) rectangle (4.4,1);
\draw[densely dotted] (4.6,-.5) rectangle (7.4,1);
\node[below] at (0.5,-.5){$i_4^\ast \cO$};
\node[below] at (7.4,-.5){$\Phi^4\cO$};
    \end{tikzpicture}
    \caption{Example of \(i_k^\ast\cO\) and \(\Phi^k\cO\). The dashed transfers are forgotten to form \(i_4^\ast\cO\) and \(\Phi^4\cO\).}
    \label{fig:Restriction and GFP}
\end{figure}

\subsection{Wrapped and saturated systems}

We single out two families of transfer systems that are useful for our counts.

\begin{definition}
    A transfer system \(\cO\) for \([n]\) is {\emph{wrapped}} if \(1\toO{\cO} n\).
\end{definition}

\begin{example}
    The transfer systems \(\cO_L\) and \(\cO_R\) from Figure~\ref{fig:Concatination} are wrapped; the transfer system \(\cO_L\oplus\cO_R\) is not.
\end{example}

\begin{example}
    The transfer system \(\cO\) in Figure~\ref{fig:Restriction and GFP} is wrapped, as is \(i_{4}^\ast\cO\). The transfer system \(\Phi^4\cO\) is not. 
\end{example}

By the restriction condition, in a wrapped transfer system, we have \(1\to j\) for all \(j\), and we think of the largest transfer as ``wrapping'' the rest of the transfer system. Additionally, we can always ``wrap'' any given transfer system, using the circle-dot product of \cite{BBR}. We use a single case of their construction here.

\begin{definition}
    If \(\cO\in T_n\), let \(w(\cO)\) be the transfer system for \([n+1]\) with
    \begin{enumerate}
        \item for all \(j\), \(1\to j\) in \(w(\cO)\), and
        \item \(\Phi^{1}\big(w(\cO)\big)=\cO\).
    \end{enumerate}
\end{definition}

This is implicit in Balchin--Barnes--Roitzheim's ``circle-dot product'' and their decomposition theorem.
\begin{theorem}[{\cite[Corollary 21]{BBR}}]\label{thm:BBRDecomp}
Any transfer system for \([n]\) with \(n\geq 1\) decomposes uniquely as
\[
    \cO\odot\cO'=\cO\oplus w(\cO'),
\]
for some transfer systems \(\cO\in T_m\) and \(\cO'\in T_{n-m-1}\).
\end{theorem}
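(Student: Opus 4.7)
The plan is to locate the ``wrapped tail'' of \(\cO\) and strip it off. Set \(m+1\) to be the smallest element of \([n]\) satisfying \(m+1\toO{\cO} n\); such an \(m\) exists because \(n\to n\) always holds, so \(0\leq m\leq n-1\). With this choice I would take \(\cO_L := i_m^\ast\cO\) and \(\cO' := \Phi^{m+1}\cO\) and show that \(\cO = \cO_L\oplus w(\cO')\), then derive uniqueness from the same characterization of \(m\).

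The crux is an ``absence of cross-transfers'' lemma: if \(i\leq m\) and \(j\geq m+1\), then \(i\not\to j\) in \(\cO\). To prove it, suppose \(i\to j\). The restriction condition applied at \(k=m+1\) yields \(i\to m+1\); composing with \(m+1\to n\) via transitivity of the partial order gives \(i\to n\) with \(1\leq i\leq m<m+1\), contradicting the minimality of \(m+1\). Once cross-transfers are ruled out, every nontrivial arrow of \(\cO\) lies either entirely in \(\{1,\dots,m\}\) (hence visible in \(\cO_L\)) or entirely in \(\{m+1,\dots,n\}\) (hence visible in \(\Phi^m\cO\)), so the definition of \(\oplus\) reassembles \(\cO\) from the two pieces. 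It remains to identify \(\Phi^m\cO\) with \(w(\cO')\): restricting \(m+1\to n\) in \(\cO\) shows the first element of \(\Phi^m\cO\) transfers to every element, so \(\Phi^m\cO\) is wrapped, and the equality \(\Phi^1\Phi^m\cO=\Phi^{m+1}\cO=\cO'\) supplies the second characterizing property of \(w\).

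For uniqueness, I would start from a putative decomposition \(\cO = \cO_1\oplus w(\cO'')\) with \(\cO_1\in T_a\) and \(\cO''\in T_{n-a-1}\). The wrapped condition on the right factor forces \(a+1\toO{\cO} n\), so \(a\geq m\) by minimality of \(m+1\); conversely, \(a>m\) would place \(m+1\) in the left block and \(n\) in the right block, turning \(m+1\to n\) into a forbidden cross-transfer in the concatenation. Hence \(a=m\), and then \(\cO_1=\cO_L\) and \(\cO''=\cO'\) are determined by the restriction/fixed-point formulas. The main obstacle is the no-cross-transfer claim, as it is the only step where the minimality of \(m+1\) is used in an essential way; the remaining verifications are bookkeeping with \(i_k^\ast\), \(\Phi^k\), and the defining properties of \(w\).
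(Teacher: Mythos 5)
Your argument is correct and complete. Note that the paper does not prove this statement at all---it is imported by citation from Balchin--Barnes--Roitzheim---so there is no in-paper proof to compare against; what you have written is the natural self-contained argument: take \(m+1\) minimal with \(m+1\toO{\cO}n\), use the restriction axiom plus transitivity to rule out cross-transfers from \(\{1,\dots,m\}\) into \(\{m+1,\dots,n\}\), identify the top block with \(w(\Phi^{m+1}\cO)\) via the two characterizing properties of \(w\), and recover uniqueness from the same minimality. All steps check out, including the edge cases \(m=0\) (where \(\cO\) itself is wrapped) and \(m=n-1\) (where \(\cO'=\emptyset\)).
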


\begin{corollary}\label{cor:FullDecomp}
    Any transfer system \(\cO\) for \([n]\) can be written uniquely as
    \[
        \cO=w(\cO_1)\oplus\dots\oplus w(\cO_k)
    \]
    for some transfer systems \(\cO_1,\dots,\cO_k\).
\end{corollary}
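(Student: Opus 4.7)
The plan is a straightforward induction on $n$, using Theorem~\ref{thm:BBRDecomp} as the engine. The base case $n = 0$ is immediate: the only transfer system in $T_0$ is $\emptyset$, which we realize as the empty concatenation ($k = 0$).

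For the inductive step with $n \geq 1$, I would first invoke Theorem~\ref{thm:BBRDecomp} to produce a unique factorization
\[
    \cO = \cO_L \oplus w(\cO_k)
\]
with $\cO_L \in T_m$ and $\cO_k \in T_{n-m-1}$ for some $0 \leq m \leq n-1$. Since $m < n$, the inductive hypothesis applies to $\cO_L$ and yields a unique decomposition $\cO_L = w(\cO_1) \oplus \cdots \oplus w(\cO_{k-1})$ (interpreted as the empty product when $m = 0$). Concatenating produces the desired expression for $\cO$. Along the way I would record that $\oplus$ is associative, which is immediate from the definition since both associations describe the same disjoint union of posets equipped with its canonical map into $[n]$.

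Uniqueness would be handled in the same order: any presentation $\cO = w(\cO_1) \oplus \cdots \oplus w(\cO_k)$ is in particular of the form $\cO_L \oplus w(\cO_k)$ with $\cO_L = w(\cO_1) \oplus \cdots \oplus w(\cO_{k-1})$, so Theorem~\ref{thm:BBRDecomp} pins down the rightmost wrapped factor and the inductive uniqueness applied to $\cO_L$ pins down the rest.

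I do not expect any real obstacle here: the substance of the corollary is just that the left factor of the BBR decomposition is itself a transfer system to which the theorem reapplies, and the iteration terminates when this left factor becomes empty. The only point requiring even minor attention is the associativity of $\oplus$, which lets us freely concatenate without parenthesizing.
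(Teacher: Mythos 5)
Your argument is correct and is exactly the intended one: the paper states this corollary without proof, and the implicit justification is precisely the iteration of Theorem~\ref{thm:BBRDecomp} that you carry out, peeling off the rightmost wrapped factor and inducting on the left factor. Your attention to associativity of \(\oplus\) and to the empty case \(n=0\) is appropriate but raises no issues.
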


A special case of wrapped transfer systems is given by complete ones.

\begin{definition}
    The {\emph{complete}} transfer system for \([n]\) is the one for which \(i\to j\) for all \(i\leq j\). Denote it \(\cO^{cpt}_{n}\).
\end{definition}

\begin{remark}
    The complete transfer system for \([n]\) is one for which the partial order is just the inclusion. This means that complete transfer systems are maximal elements in the poset of transfer systems.
\end{remark}

These complete transfer systems will be especially useful building blocks for us.

\begin{definition}
    A transfer system \(\cO\) is {\emph{saturated}} if it can be written as a direct sum of complete transfer systems.
\end{definition}

%The saturated transfer systems on \([n]\) are easy to enumerate, since the only data we need record is the size of each complete summand. 
%\begin{notation}
%Given a partition of \(k\): \(k_1+\dots+k_n\), let
%\[
%    \vec{k}=(k_1,\dots,k_n),
%\]
%and define the associated saturated transfer system on \([k]\) by
%\[
%    \cO^{sat}_{\vec{k}}:=\cO_{k_1}^{cpt}\oplus\dots\oplus \cO_{k_n}^{cpt}.
%\]
%\end{notation}
%
%%A partition of a natural number \(k\) is the same data as a partition of \([k]\) into intervals for the partial order \(\leq\). This 
%The collection of partitions of \(k\)
%itself forms a partially ordered set under ``refinement''. Given a partition \(\vec{k}=(k_1,\dots,k_n)\), if for each \(i\)
%\[
%    k_i=k_{i,1}+\dots+k_{i,j_i},
%\]
%then
%\[
%    \sum_{i=1}^{n}\sum_{j=1}^{j_i} k_{i,j}\leq\vec{k}.
%\]
%
%\begin{proposition}
%    The assignment
%    \[
%        \vec{k}\to \cO^{sat}_{\vec{k}}
%    \]
%    defines a order preserving bijection from the partitions of \(|k|\) to the saturated transfer systems on \([k]\).
%\end{proposition}

Given any transfer system on \([n]\), there is a minimal saturated transfer system that contains it. This is immediate from the observation that the intersection of two saturated transfer systems is again a saturated transfer system. For concreteness, we spell this out directly here.

\begin{proposition}\label{prop:Hull}
    If \(\cO\) is any transfer system, then there is a minimal saturated transfer system that contains \(\cO\).
\end{proposition}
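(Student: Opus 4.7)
The plan is to follow the hint and realize the minimal saturated transfer system containing $\cO$ as an intersection. The complete transfer system $\cO^{cpt}_n$ is a one-term direct sum, hence saturated, and it contains every transfer system on $[n]$; thus the collection
\[
\mathcal{S}=\{\cO'\in T_n \mid \cO'\text{ is saturated and }\cO\leq\cO'\}
\]
is nonempty, and since $T_n$ is finite, so is $\mathcal{S}$. An intersection of transfer systems (as relations) is manifestly a transfer system, since both the weak subposet condition and the restriction condition are preserved by intersecting relations. It therefore suffices to show that the intersection of two saturated transfer systems is again saturated; intersecting over all of $\mathcal{S}$ then yields a saturated transfer system that contains $\cO$ and sits below every member of $\mathcal{S}$, which is the required minimum.

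The main step is the dictionary between saturated transfer systems on $[n]$ and partitions of $[n]$ into consecutive intervals (``blocks''): under the decomposition $\cO^{cpt}_{k_1}\oplus\cdots\oplus\cO^{cpt}_{k_r}$, the associated blocks have sizes $k_1,\dots,k_r$ read left to right, and $i\to j$ holds in the saturated system precisely when $i$ and $j$ lie in a common block. Given two saturated systems with block partitions $\{B_\alpha\}$ and $\{C_\gamma\}$, the intersection of their relations holds on $(i,j)$ exactly when $i$ and $j$ share a $B_\alpha$ and share a $C_\gamma$, i.e.\ both lie in $B_\alpha\cap C_\gamma$.

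The one substantive observation is that the intersection of two consecutive intervals in $[n]$ is again a (possibly empty) consecutive interval; it follows that the nonempty pieces $B_\alpha\cap C_\gamma$ form a partition of $[n]$ into consecutive blocks. Translating back through the dictionary identifies the intersection of the two saturated transfer systems with the saturated system attached to this common refinement, so closure under pairwise intersection is established. No deeper obstacle is involved; the entire argument is elementary combinatorics on interval partitions.
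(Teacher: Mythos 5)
Your proof is correct, but it takes a different route from the one written in the paper. The paper's proof is an explicit construction: using the Balchin--Barnes--Roitzheim decomposition (Corollary~\ref{cor:FullDecomp}), write \(\cO=w(\cO_{1})\oplus\dots\oplus w(\cO_{r})\) as a concatenation of wrapped summands of sizes \(n_1,\dots,n_r\), and declare the hull to be \(\cO_{n_1}^{cpt}\oplus\dots\oplus\cO_{n_r}^{cpt}\); minimality follows because each wrapped summand contains the transfer \(1\to n_i\), which forces any saturated system containing \(\cO\) to have blocks at least as coarse as these. You instead prove the closure statement that the paper only asserts in passing (``the intersection of two saturated transfer systems is again saturated'') via the dictionary between saturated systems on \([n]\) and partitions of \([n]\) into consecutive intervals, observing that the common refinement of two interval partitions is an interval partition, and then take the intersection over the finite, nonempty family of saturated systems above \(\cO\). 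Both arguments are sound. The trade-off is that the paper's construction immediately yields the explicit formula for \(\hull(\cO)\) in terms of the wrapped decomposition, which is what the later counting arguments actually use, whereas your argument is more self-contained (it does not invoke Theorem~\ref{thm:BBRDecomp}) and establishes the useful general fact that saturated systems are closed under intersection, but leaves the explicit description of the hull implicit.
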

\begin{proof}
    Write \(\cO\) as a direct sum of wrapped transfer systems \(\cO=\cO_{n_1}\oplus\dots\cO_{n_r}\), with \(\cO_{n_i}\in T_{n_i}\). Then minimal saturated transfer system containing it is simply 
    \[
        \cO_{n_1}^{cpt}\oplus\dots\cO^{cpt}_{n_r},
    \]
    where we replace each summand with the complete one of that size.
\end{proof}

\begin{definition}
    If \(\cO\) is a transfer system for \([n]\), let \(\hull(\cO)\) denote the minimal saturated transfer system that contains it. This is the {\emph{saturated hull}}.
\end{definition}

For transfer systems for \([n]\), there is a kind of dual notion of a maximal saturated transfer systems inside any given transfer system. 

Complete transfer systems have two useful properties:
\begin{enumerate}
    \item Whenever \(i\to j\), we also have \(k\to j\) for all \(i\leq k\leq j\), and
    \item they are generated as a poset as the transitive closure of the relation \(i\to {i+1}\) for all \(1\leq i\leq n-1\).
\end{enumerate}
These two properties give us two different ways to repackage the condition of being saturated. 

\begin{proposition}
    A transfer system \(\cO\) for \(n\) is saturated if and only if whenever \(i\to j\) with \(i<j\), we have \({{(j-1)}}\to {j}\).
\end{proposition}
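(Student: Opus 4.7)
The plan is to handle the two directions separately, with the reverse direction carrying the real content.

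For the forward direction, I would write a saturated \(\cO\) as a direct sum \(\cO=\cO^{cpt}_{n_1}\oplus\dots\oplus\cO^{cpt}_{n_r}\). Any relation \(i\to j\) with \(i<j\) sits entirely inside one complete summand; since that summand relates every pair, and \(j-1\) lies between \(i\) and \(j\) in the same block, \((j-1)\to j\) holds automatically.

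For the converse, the key step is a propagation claim: under the hypothesis, whenever \(i\to j\) with \(i<j\) we in fact have \(k\to (k+1)\) for every \(i\leq k<j\). I would prove this by alternating two moves: apply the hypothesis to the current transfer \(i\to j\) to extract \((j-1)\to j\); then apply the restriction axiom along \(i\leq j-1\leq j\) to recover \(i\to(j-1)\). Iterating peels the top of the interval down one step at a time and, together with the hypothesis applied at each stage, gives each consecutive relation in the range.

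Once propagation is in hand, I would define blocks to be the maximal intervals \([a_\ell,b_\ell]\subseteq[n]\) on which every consecutive pair is related by \(\to\). Within a block, transitivity of the partial order \(\cO\) (composing consecutive arrows) yields \(i\to j\) for all \(i\leq j\) in the block, so the restriction of \(\cO\) there is complete. Between blocks no transfer can occur: a hypothetical cross-block relation would, by propagation, produce a consecutive relation at the boundary, contradicting maximality of the blocks. Hence \(\cO\) equals the direct sum of the complete transfer systems on its blocks, i.e., it is saturated.

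The main obstacle is the propagation step; once the correct interplay between the saturation hypothesis and the restriction axiom is identified, the block decomposition is essentially automatic (and matches the wrapped-summand decomposition from Corollary~\ref{cor:FullDecomp}, with each wrapped summand forced to be complete).
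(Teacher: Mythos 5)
Your proof is correct, and the converse is organized differently from the paper's. The paper first invokes the decomposition of an arbitrary transfer system into wrapped summands (Corollary~\ref{cor:FullDecomp}) and then shows each wrapped summand satisfying the hypothesis must be complete, starting from the top arrow \(1\to n\) and using restriction plus the hypothesis to fill in everything below. You instead prove a self-contained propagation lemma --- from any \(i\to j\), alternately applying the hypothesis and the restriction axiom yields \(k\to(k+1)\) for all \(i\leq k<j\) --- and then build the block decomposition by hand: maximal intervals of consecutive arrows are complete by transitivity, and propagation rules out any arrow crossing a block boundary. The underlying mechanism (hypothesis plus restriction iterated down an interval) is the same in both arguments, but your version buys independence from the Balchin--Barnes--Roitzheim decomposition theorem, effectively reproving the relevant piece of it in this special case; the paper's version is shorter because it outsources the decomposition. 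Both are complete and valid.
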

\begin{proof}
    If \(\cO\) is saturated, then by definition of the direct sum, if \(i\to j\), then \(i\) and \(j\) correspond to subgroups from the same direct summand. By completeness, we therefore have all intermediate transfers.
    
    Using the decomposition of a transfer system into wrapped ones (Corollary~\ref{cor:FullDecomp}), we see that it suffices to show that if a wrapped transfer system that has the property that \(i\to j\) implies \((j-1)\to j\), then the transfer system is complete. This follows from downward induction on \(n\), using that \(1\to n\) by the wrapped assumption, and hence \(1\to j\) for all \(j\) by restriction.
\end{proof}

\begin{corollary}
    A transfer system \(\cO\) is saturated if and only if it is generated as a partial order by relations \(i\to {{(i+1)}}\) for some collection of positive \(i\) at most \(n-1\).
\end{corollary}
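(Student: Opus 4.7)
The plan is to deduce this corollary directly from the preceding proposition, which characterizes saturation by the property that every nontrivial relation $i\to j$ forces the cover $(j-1)\to j$. The strategy is to prove both implications by explicitly comparing the partial order generated by a set of cover relations to a direct sum of complete transfer systems.

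For the forward direction, suppose $\cO$ is saturated, so that $\cO=\cO^{cpt}_{n_1}\oplus\dots\oplus\cO^{cpt}_{n_r}$. Set $m_k=n_1+\dots+n_k$. Within each block $\cO^{cpt}_{n_k}$ on $\{m_{k-1}+1,\dots,m_k\}$, every pair is related, and the complete relation is exactly the transitive closure of the internal cover relations $i\to(i+1)$ with $m_{k-1}+1\leq i<m_k$. Assembling these across blocks shows that $\cO$ is generated as a partial order by precisely the cover relations $i\to(i+1)$ for which $i$ and $i+1$ lie in the same block, i.e.\ for $i\notin\{m_1,\dots,m_{r-1}\}$.

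For the reverse direction, suppose $\cO$ is the partial order generated by a chosen set of cover relations $i\to(i+1)$ for $i\in S\subseteq\{1,\dots,n-1\}$. A direct analysis of the transitive closure shows that for $a<b$ one has $a\toO{\cO}b$ if and only if $\{a,a+1,\dots,b-1\}\subseteq S$. This description simultaneously verifies the restriction condition (so $\cO$ is genuinely a transfer system) and shows that any nontrivial relation $i\to j$ forces $j-1\in S$, hence $(j-1)\to j$. Applying the preceding proposition then gives that $\cO$ is saturated.

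The only subtle point is making sure that ``generated as a partial order'' indeed produces a transfer system and not just a poset, but the explicit combinatorial description of the transitive closure handles this side condition at the same time as the saturation criterion, so there is no real obstacle beyond bookkeeping.
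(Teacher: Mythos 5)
Your argument is correct and matches the paper's intent: the corollary is deduced from the preceding proposition, and your explicit description of the transitive closure ($a\to b$ iff $\{a,\dots,b-1\}\subseteq S$) cleanly handles both the restriction axiom and the cover criterion, which is exactly the "comes along for free" point the paper remarks on. No issues.
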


The possibly surprising part here is that we only need the partial order: the other parts of being a transfer system come along for free in this case, since we are generating by a covering condition. This gives us a second kind of structural result.

\begin{definition}
    If \(\cO\) is a transfer system let the {\emph{core}} of \(\cO\), denoted \(\core{\cO}\), be the partial order generated by \({i}\to {{(i+1)}}\), where \(i\) ranges over the integers from \(1\) to \(n-1\) such that \(i\to(i+1)\).
\end{definition}

\begin{example}
    In Figure~\ref{fig: Core Example}, we see the core of the transfer system \(\cO\) from Figure~\ref{fig:Restriction and GFP}.

\begin{figure}[ht]
    \centering
    \begin{tikzpicture}
\foreach \x in {1,...,4} {
    \draw[circle,fill] (\x,0)circle[radius=1mm]node[below]{$\x$};
};
\foreach \x in {5,6,7} {
    \draw[circle,fill] (\x,0)circle[radius=1mm]node[below]{$\x$};
};
\foreach \x in {1,2,3,6}{
    \draw(\x,0) to (\x+1,0);
};
\foreach \x in {1} {
    \draw(\x,0) to[bend left=60] (\x+2,0);
};
\draw(1,0) to [bend left=60] (4,0);
\draw(2,0) to [bend right=60] (4,0);
    \end{tikzpicture}
    \caption{Core of \(\cO\) from Figure~\ref{fig:Restriction and GFP}}
    \label{fig: Core Example}
\end{figure}

\end{example}

\section{Compatible pairs}
Our main object of study is the notion of compatible pairs. These were defined by Blumberg--Hill to describe ``compatibility'' between equivariant norms and transfers in an abstract, categorical way \cite{BHBiIncomplete}. Chan reformulated this in the language of transfer system, giving a purely combinatorial formulation \cite[Theorem 4.10]{Chan}. We use that here for the special case of \(C_{p^n}\).

\begin{definition}[{\cite[Definition 4.6]{Chan}}]\label{def:Compatible}
    A pair of transfer system \((\cO_a,\cO_m)\) for \([n]\) are compatible if for whenever \(i\xrightarrow{m} j\), we have \(k\xrightarrow{a} j\) for all \(i\leq k\leq j\).
\end{definition}

Note here that the conditions are asymmetrical: arrows in \(\cO_m\) force those in \(\cO_a\). Moreover, this is a kind of relative saturation condition, with an arrow in \(\cO_m\) actually forcing \(\cO_a\) to look saturated in a range. This gives us two equivalent forms.

\begin{proposition}\label{prop:CompatibleCoreHull}
    A pair \((\cO_a,\cO_m)\) is compatible if and only if the following equivalent comparisons hold
    \begin{enumerate}
        \item \(\cO_m\leq \core(\cO_a)\),
        \item \(\hull(\cO_m)\leq\cO_a\), and
        \item \(\hull(\cO_m)\leq\core(\cO_a)\).
    \end{enumerate}
\end{proposition}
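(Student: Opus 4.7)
The plan is to prove all three conditions are equivalent to compatibility by establishing the cyclic chain
\[
    (1)\Rightarrow (3)\Rightarrow (2)\Rightarrow \text{compatibility}\Rightarrow (1),
\]
from which the mutual equivalence of (1), (2), and (3) is immediate.

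The first two implications are essentially formal consequences of what has already been set up. For $(1)\Rightarrow(3)$ I would combine the corollary characterizing saturated systems as those generated by adjacent covers (which in particular shows that $\core(\cO_a)$ is itself saturated) with the defining property of $\hull(\cO_m)$ as the minimal saturated transfer system containing $\cO_m$. For $(3)\Rightarrow(2)$ it suffices to observe $\core(\cO_a)\leq \cO_a$: every generating cover of the core is a relation of $\cO_a$, so the transitive closure stays inside $\cO_a$.

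For $(2)\Rightarrow$ compatibility, I would use Corollary~\ref{cor:FullDecomp} to decompose $\cO_m$ as a direct sum of wrapped summands supported on intervals $[a_s,b_s]$; by the construction in Proposition~\ref{prop:Hull} the hull then contains \emph{every} relation $k\to j$ with $a_s\leq k\leq j\leq b_s$. Since any $i\xrightarrow{m} j$ has both endpoints inside a single such interval, every $k\to j$ demanded by Definition~\ref{def:Compatible} already appears in $\hull(\cO_m)$, and (2) puts them into $\cO_a$.

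The main step, where the genuine content lives, is compatibility $\Rightarrow(1)$. Given $i\xrightarrow{m}j$ with $i<j$, compatibility yields $k\xrightarrow{a}j$ for every $i\leq k<j$; I would then apply the restriction condition of $\cO_a$ to each such relation at the intermediate element $k+1\leq j$ to extract the cover $k\xrightarrow{a}(k+1)$. Since the core is precisely the transitive closure of such adjacent covers, this places $i\to j$ in $\core(\cO_a)$, giving $\cO_m\leq \core(\cO_a)$. The place I expect to slow down is this translation from the pointwise ``\(k\xrightarrow{a}j\) for all \(k\)'' form of compatibility to the covering-relation form that actually defines the core; once restriction supplies that passage, everything else is bookkeeping.
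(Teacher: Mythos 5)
Your proof is correct and takes essentially the same route as the paper: the paper identifies compatibility with condition (2) directly and then deduces the equivalence of (1)--(3) from the facts that \(\core(\cO_a)\) is the largest saturated system below \(\cO_a\) and \(\hull(\cO_m)\) the smallest saturated system above \(\cO_m\) --- exactly the ingredients your cyclic chain assembles, just reorganized. Your write-up is in fact more explicit than the paper's two-sentence sketch (in particular the passage from the pointwise compatibility condition to adjacent covers via restriction, and the use of the wrapped decomposition to unwind the hull), and all four implications check out.
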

\begin{proof}
    The conditions of Definition~\ref{def:Compatible} are a restatement of the condition that the saturated hull of \(\cO_m\) is less than or equal to \(\cO_a\). For the equivalence of the three conditions, we use that the core of \(\cO\) is the largest saturated transfer system less than or equal to \(\cO\) and the hull is the smallest saturated transfer system greater than or equal to \(\cO\). 
\end{proof}

\begin{corollary}
    Let \((\cO_a,\cO_m)\) be a compatible pair of transfer systems for \([n]\). If for some \(1\leq k\leq n-1\), we have \(k\not\toO{a} (k+1)\), then for all \(j\leq k\) and \(\ell\geq (k+1)\), we must have \(j\not\toO{m} \ell\).
\end{corollary}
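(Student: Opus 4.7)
The plan is to prove the contrapositive: assuming there exist $j \leq k$ and $\ell \geq k+1$ with $j \toO{m} \ell$, I will derive that $k \toO{a} (k+1)$, contradicting the hypothesis.

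First I would invoke the compatibility condition as stated in Definition~\ref{def:Compatible}. Since $j \toO{m} \ell$, we get $p \toO{a} \ell$ for every $p$ with $j \leq p \leq \ell$. Because $j \leq k \leq k+1 \leq \ell$, the value $p = k$ lies in this range, and hence $k \toO{a} \ell$. Next I would apply the restriction condition for the transfer system $\cO_a$ along the inequality $k \leq k+1 \leq \ell$: intersecting the source $\{1,\dots,k\}$ with the subposet $[k+1]$ gives $k \cap (k+1) = k \toO{a} (k+1)$. This contradicts the assumption that $k \not\toO{a} (k+1)$.

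There is really no obstacle here: the proof is a two-line deduction from Definition~\ref{def:Compatible} and the restriction axiom. Alternatively, one could phrase the same argument in the language of Proposition~\ref{prop:CompatibleCoreHull}: a relation $j \toO{m} \ell$ crossing the gap between $k$ and $k+1$ forces $\hull(\cO_m)$ to contain the covering relation $k \to (k+1)$ (since $j$ and $\ell$ lie in the same wrapped summand of the decomposition of $\cO_m$ from Corollary~\ref{cor:FullDecomp}, and replacement by complete summands in $\hull$ automatically includes every covering relation in that block), and then $\hull(\cO_m) \leq \cO_a$ yields the same contradiction. I would include the direct version above as the proof and perhaps mention the hull-based reformulation as a remark, since it motivates why this corollary will be useful in the subsequent recursion.
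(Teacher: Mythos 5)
Your argument is correct and is essentially the immediate deduction the paper intends (the corollary is stated without proof as a consequence of Definition~\ref{def:Compatible} and Proposition~\ref{prop:CompatibleCoreHull}): from \(j\toO{m}\ell\) compatibility gives \(k\toO{a}\ell\), and the restriction axiom for transfer systems on \([n]\) then gives \(k\toO{a}(k+1)\), contradicting the hypothesis. Your hull-based reformulation is likewise the route suggested by Proposition~\ref{prop:CompatibleCoreHull}, so both versions align with the paper.
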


Put another way, we see that \(\cO_m\) must break apart at \(k\), and this must be compatible with \(\cO_a\).

\begin{corollary}\label{cor:CompatiblesBreak}
    Let \((\cO_a,\cO_m)\) be a compatible pair of transfer systems for \([n]\). If for some \(1\leq k\leq (n-1)\), we have \(k\not\toO{a} (k+1)\), then 
    \begin{enumerate}
        \item \(\cO_m=i_{k}^{\ast}\cO_m\oplus\Phi^{k}\cO_m\), and
        \item the pairs 
        \[
        (i_{k}^\ast\cO_a,i_{k}^{\ast}\cO_m)\text{ and }(\Phi^{k}\cO_a,\Phi^{k}\cO_m)
        \]
        are compatible.
    \end{enumerate}
\end{corollary}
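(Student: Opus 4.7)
The plan is to apply the preceding corollary to extract a ``no-crossing'' property for \(\cO_m\), and then unwind the definitions of \(\oplus\), \(i_k^\ast\), and \(\Phi^k\) to verify both statements.

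For part (1), I would first invoke the immediately preceding corollary: from \(k\not\toO{a}(k+1)\) we obtain \(j\not\toO{m}\ell\) whenever \(j\leq k\) and \(\ell\geq k+1\). Thus every \(\cO_m\)-arrow \(i\toO{m}j\) has either \(j\leq k\) or \(i\geq k+1\). These are exactly the two clauses in the definition of the direct sum. The forward inclusion \(i_k^\ast\cO_m\oplus\Phi^k\cO_m\subseteq\cO_m\) is immediate from the definitions of \(i_k^\ast\) and \(\Phi^k\), and the reverse inclusion is exactly the dichotomy just established. This gives \(\cO_m=i_k^\ast\cO_m\oplus\Phi^k\cO_m\).

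For part (2), the argument is a direct check against Definition~\ref{def:Compatible}, since restriction along an interval inclusion preserves all intermediate elements. For the pair \((i_k^\ast\cO_a,i_k^\ast\cO_m)\), suppose \(i\toO{i_k^\ast\cO_m}j\); then \(i\toO{m}j\) with \(i\leq j\leq k\), so by compatibility of \((\cO_a,\cO_m)\), \(k'\toO{a}j\) for all \(i\leq k'\leq j\). Each such arrow has both endpoints in \([k]\), hence lies in \(i_k^\ast\cO_a\). The argument for \((\Phi^k\cO_a,\Phi^k\cO_m)\) is identical after the translation \(x\mapsto x+k\): an arrow \(i\toO{\Phi^k\cO_m}j\) lifts to \((i+k)\toO{m}(j+k)\) with \(i+k\geq k+1\), its required intermediate \(a\)-arrows all lie in the upper range, and they descend back to arrows in \(\Phi^k\cO_a\).

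There is essentially no obstacle here; the whole content is in choosing the right splitting point, which the previous corollary hands us. The only thing to be careful about is keeping track of the index shift \(+k\) when transporting the compatibility condition across \(\Phi^k\), and ensuring that the intermediate indices \(i\leq k'\leq j\) stay within the interval under consideration — which is automatic since both endpoints do.
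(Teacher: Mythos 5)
Your argument is correct and matches the route the paper intends: the paper states this corollary without proof, as an immediate consequence of the preceding corollary (no $\cO_m$-arrow crosses the gap at $k$) together with unwinding the definitions of $\oplus$, $i_k^\ast$, and $\Phi^k$. Your dichotomy argument for part (1) and the index-shift check for part (2) are exactly the details being suppressed, so there is nothing to add.
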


\begin{definition}
Let
\[
    \mathcal D^{(0)}_n=\big\{(\cO_a,\cO_m)\mid\text{compatible}\big\}\subset T_n\times T_n.
\]
\end{definition}

We have projection maps
\[
    p_a,p_m\colon\mathcal D^{(0)}_n\to T_n
\]
which take a pair \((\cO_a,\cO_m)\) to \(\cO_a\) or \(\cO_m\) respectively. Our main goal is to find the cardinality of \(\mathcal D^{(0)}_n\) for all \(n\). We solve this in several different ways using different aspects of Proposition~\ref{prop:CompatibleCoreHull}.

\section{Solving the recurrence relations}\label{sec:Recurr}

\subsection{Decomposition and Recurrence Relation} \label{subsec decom recur}

The wrapping map \(w\) defines a natural filtration on the collection of transfer systems. We can use this to build a recursive relation describing compatible pairs.

\begin{definition}
    For each \(i\geq 0\), let
    \[
        (\mathcal F^i T)_n= Im(w^{\circ i})\subset T_n,
    \]
    viewed as a graded subset.
\end{definition}

\begin{definition}
    Let 
    \[
        \mathcal D^{(i)}_n=p_a^{-1}\big((\mathcal F^i T)_n\big)
    \]
    be the set of composable pairs with \(\cO_a\in \mathcal F^i T\). 
    
    Let \(d(n,i)=|\mathcal D^{(i)}_n|\) be the corresponding cardinality. 
\end{definition}

We deduce our recursive formulae from the Balchin--Barnes--Roitzheim decomposition theorem (Theorem~\ref{thm:BBRDecomp}). We restate the result here to set up our decomposition.
\begin{proposition}\label{prop: Refined Decomposition}
    If \(\cO\) is a transfer system in \((\mathcal F^i T)_n\), then there is 
    \begin{enumerate}
        \item a unique natural number \(1\leq j\leq n-i\),
        \item a unique wrapped transfer system \(w\cO_R\) in \(T_j\), 
        \item and a unique transfer system \(\cO_L\) in \(T_{n-i-j}\)
    \end{enumerate}   
    such that
    \[
        \cO=w^i\big(\cO_L\oplus w\cO_R\big).
    \]
\end{proposition}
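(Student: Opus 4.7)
The plan is to combine the definition of the filtration $(\mathcal F^i T)_n = \mathrm{Im}(w^{\circ i})$ with the Balchin--Barnes--Roitzheim decomposition (Theorem~\ref{thm:BBRDecomp}). The argument splits naturally into two halves: first reduce along $w^i$ to obtain a transfer system on $[n-i]$, then apply Theorem~\ref{thm:BBRDecomp} to that. Uniqueness will follow from injectivity of $w$ combined with the uniqueness assertion in Theorem~\ref{thm:BBRDecomp}.

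First I would observe that $\Phi^1 \circ w = \mathrm{id}$ by defining property (2) of $w$, so $w$, and hence $w^{\circ i}$, is injective. Since $\cO \in (\mathcal F^i T)_n$, there is therefore a unique $\cO' \in T_{n-i}$ with $\cO = w^i(\cO')$, recovered concretely as $\cO' = (\Phi^1)^{\circ i}(\cO)$. This reduces the proposition to a decomposition statement about $\cO'$ alone.

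Next, assuming $n - i \geq 1$ so that $\cO'$ lies in a nontrivial $T_{n-i}$, I would invoke Theorem~\ref{thm:BBRDecomp} to decompose $\cO'$ uniquely as $\cO_L \oplus w\cO_R$ with $\cO_L \in T_m$ and $w\cO_R \in T_{n-i-m}$ for some unique $m$ with $0 \leq m \leq n-i-1$. Setting $j := n-i-m$ puts $w\cO_R \in T_j$ and $\cO_L \in T_{n-i-j}$ with $1 \leq j \leq n-i$, which is precisely the parameter range claimed. The uniqueness of the triple $(m, \cO_L, w\cO_R)$ supplied by Theorem~\ref{thm:BBRDecomp} transfers through this bijective change of variables to give uniqueness of $(j, \cO_L, w\cO_R)$.

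The only mild obstacle is organizational, namely correctly translating between the BBR indexing variable $m$ (the size of the left summand) and the indexing variable $j$ (the size of the wrapped right summand) in the proposition; there is no substantive computation. The mathematical content is entirely in the injectivity of $w$ together with the uniqueness portion of the Balchin--Barnes--Roitzheim decomposition.
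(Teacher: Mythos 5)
Your argument is correct and matches the paper's intent exactly: the paper offers no separate proof, presenting the proposition as a restatement of Theorem~\ref{thm:BBRDecomp} after stripping off \(w^i\), which is precisely what you do via the injectivity of \(w\) (from \(\Phi^1\circ w=\mathrm{id}\)) followed by the unique BBR decomposition of \((\Phi^1)^{\circ i}\cO\). Your bookkeeping of the index \(j=n-i-m\) and the caveat about the degenerate case \(n-i=0\) (which the paper handles separately as the base case \(\mathcal D^{(n)}_n\)) are both accurate.
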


\begin{notation}
    Let \((\mathcal F^i T)_{n,j}\) be the set of transfer systems in \((\mathcal F^i T)_n\) which decompose as
    \[
        \cO=w^i\big(\cO_L\oplus w\cO_R\big)
    \]
    with \(w\cO_R\in T_j\) a wrapped transfer system.
\end{notation}

\begin{proposition}
    The map
    \[
        \left(\coprod_{j=1}^{n-i-1} \mathcal D^{(i)}_{n-j}\times \mathcal D^{(1)}_{j}\right)\amalg \mathcal D^{(i+1)}_n
        \to \mathcal D^{(i)}_n
    \]
    given on \(\mathcal D^{(i)}_{n-j}\times \mathcal D^{(1)}_{j}\) by
    \[
        \big((w^i\cO_a',\cO_m'),(w\cO_a'',\cO_m'')\big)\mapsto \big(w^i(\cO_a'\oplus w\cO_a''),\cO_m'\oplus\cO_m''\big)
    \]
    and on the last summand by the natural inclusion, is a bijection.
\end{proposition}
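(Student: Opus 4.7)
The plan is to build an explicit two-sided inverse. The forward map glues pairs, and the inverse uses the refined decomposition of $\cO_a$ (Proposition~\ref{prop: Refined Decomposition}) together with the $\cO_m$-splitting provided by Corollary~\ref{cor:CompatiblesBreak}.

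Given $(\cO_a,\cO_m)\in\mathcal D^{(i)}_n$, apply Proposition~\ref{prop: Refined Decomposition} to uniquely write $\cO_a=w^i(\cO_L\oplus w\cO_R)$ with $\cO_L\in T_{n-i-j}$ and $w\cO_R\in T_j$ for some $1\leq j\leq n-i$. When $j=n-i$ the piece $\cO_L$ is empty, so $\cO_a\in(\mathcal F^{i+1}T)_n$, and we send $(\cO_a,\cO_m)$ via the inclusion to the $\mathcal D^{(i+1)}_n$ summand. When $1\leq j\leq n-i-1$, the operator $w^i$ only introduces extra arrows out of $\{1,\dots,i\}$, so the non-arrow $(n-i-j)\not\to(n-i-j+1)$ in $\cO_L\oplus w\cO_R$ persists to $(n-j)\not\toO{a}(n-j+1)$ in $\cO_a$. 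Corollary~\ref{cor:CompatiblesBreak} then forces $\cO_m=i_{n-j}^{\ast}\cO_m\oplus\Phi^{n-j}\cO_m$ with each part compatible with the corresponding restriction of $\cO_a$. Computing the restrictions gives $i_{n-j}^{\ast}\cO_a=w^i\cO_L\in(\mathcal F^iT)_{n-j}$ and $\Phi^{n-j}\cO_a=w\cO_R\in(\mathcal F^1T)_j$, producing the candidate inverse value $\bigl((w^i\cO_L,i_{n-j}^{\ast}\cO_m),(w\cO_R,\Phi^{n-j}\cO_m)\bigr)\in\mathcal D^{(i)}_{n-j}\times\mathcal D^{(1)}_j$.

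To check the two compositions give the identity: starting from $\bigl((w^i\cO_a',\cO_m'),(w\cO_a'',\cO_m'')\bigr)$ on the $j$-th summand, the forward map yields $\bigl(w^i(\cO_a'\oplus w\cO_a''),\cO_m'\oplus\cO_m''\bigr)$; Proposition~\ref{prop: Refined Decomposition} then recovers the triple $(j,\cO_a',w\cO_a'')$, while the defining relations for the concatenation yield $i_{n-j}^{\ast}(\cO_m'\oplus\cO_m'')=\cO_m'$ and $\Phi^{n-j}(\cO_m'\oplus\cO_m'')=\cO_m''$. In the other direction, we recover $\cO_a$ by uniqueness in Proposition~\ref{prop: Refined Decomposition}, and $\cO_m$ by the decomposition statement in Corollary~\ref{cor:CompatiblesBreak}.

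The main obstacle is verifying well-definedness of the forward map, namely compatibility of the glued pair. The hull operation respects the wrapped decomposition of Corollary~\ref{cor:FullDecomp}, so $\hull(\cO_m'\oplus\cO_m'')=\hull(\cO_m')\oplus\hull(\cO_m'')$. Componentwise compatibility via Proposition~\ref{prop:CompatibleCoreHull} gives $\hull(\cO_m')\leq w^i\cO_a'$ and $\hull(\cO_m'')\leq w\cO_a''$, hence
\[
\hull(\cO_m'\oplus\cO_m'')\leq w^i\cO_a'\oplus w\cO_a''\leq w^i(\cO_a'\oplus w\cO_a''),
\]
where the last inequality uses that the outer $w^i$ only adds arrows from $\{1,\dots,i\}$ to the rest. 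Proposition~\ref{prop:CompatibleCoreHull} then certifies compatibility, completing the argument.
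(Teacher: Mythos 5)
Your argument is correct and follows essentially the same route as the paper's proof: both rest on the refined decomposition of \(\cO_a\) (Proposition~\ref{prop: Refined Decomposition}) together with Corollary~\ref{cor:CompatiblesBreak} to split \(\cO_m\) at the missing arrow \((n-j)\not\to(n-j+1)\). The only difference is one of thoroughness: you additionally verify that the forward map is well defined (the concatenation of compatible pairs is compatible, via the hull criterion of Proposition~\ref{prop:CompatibleCoreHull}) and check both composites explicitly, steps the paper leaves implicit.
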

\begin{proof}
We use Proposition~\ref{prop: Refined Decomposition} to further break up \(\mathcal D^{(i)}_n\), since the decomposition here gives a disjoint union decomposition
\[
    (\mathcal F^i T)_n = \coprod_{j=1}^{n-i} (\mathcal F^i T)_{n,j}.
\]
This decomposition induces a decomposition of \(\mathcal D^{(i)}_n\):
\[
    \mathcal D^{(i)}_{n,j}=p_1^{-1} (\mathcal F^i T)_{n,j}.
\]
Since \(T_0=\{\emptyset\}\), the unit for \(\oplus\), we have
\[
    \mathcal D^{(i)}_{n,n-i}=\mathcal D^{(i+1)}_{n},
\]
given by the usual inclusion. Now let \(1\leq j<n-i\), and consider an element \((\cO_a,\cO_m)\) in \(\mathcal D^{(i)}_{n,j}\). By definition, we have
\[
    \cO_a=w^i\big(\cO_{a,L}\oplus w\cO_{a,R}\big),
\]
with \(\cO_{a,L}\neq\emptyset\) and \(w\cO_{a,R}\) wrapped, and hence  we are missing the transfer
\[
    (i+n-j)\to (i+n-j+1)
\]
in \(\cO_a\). This means that \(\cO_m\) breaks up into a direct sum 
\[
    \cO_m'\oplus\cO_m'',
\]
where \(\cO_m'\in T_{i+n-j}\) and \(\cO_m''\in T_{j}\), by Corollary~\ref{cor:CompatiblesBreak}. Moreover, we know that the pairs
\[
\big(i_{i+n-j}^{\ast}\cO_a,\cO_m'\big)\text{ and }\big(\Phi^{i+n-j}\cO_a,\cO_m''\big)
\]
are compatible. The result follows, since
\[
    i_{i+n-j}^{\ast}\cO_a=w^i\cO_{a,L}\text{ and }\Phi^{i+n-j}\cO_a=w\cO_{a,R}.
\]
\end{proof}

\begin{corollary} \label{cor recursive formula}
    We have a recursive formula
    \[
        d(n,i)=d(n,i+1)+\sum_{j=1}^{n-i-1} d(n-j,i) d(j,1).
    \]
\end{corollary}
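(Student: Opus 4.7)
The plan is to derive this purely as a bookkeeping consequence of the bijection established in the preceding proposition: there is nothing left to prove beyond taking cardinalities on both sides.

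First I would note that a bijection of finite sets induces an equality of cardinalities, and that cardinality is additive on disjoint unions and multiplicative on Cartesian products. Applying this to the bijection
\[
    \left(\coprod_{j=1}^{n-i-1} \mathcal D^{(i)}_{n-j}\times \mathcal D^{(1)}_{j}\right)\amalg \mathcal D^{(i+1)}_n \xrightarrow{\sim} \mathcal D^{(i)}_n
\]
from the previous proposition, I would compute
\[
    |\mathcal D^{(i)}_n| = |\mathcal D^{(i+1)}_n| + \sum_{j=1}^{n-i-1} |\mathcal D^{(i)}_{n-j}|\cdot |\mathcal D^{(1)}_j|.
\]
Substituting the notation \(d(n,i)=|\mathcal D^{(i)}_n|\) then yields exactly the claimed recursion.

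There is really no obstacle here; the entire content sits in the bijection of the preceding proposition, which in turn packages together the Balchin--Barnes--Roitzheim decomposition (Proposition~\ref{prop: Refined Decomposition}) with the forced splitting of \(\cO_m\) from Corollary~\ref{cor:CompatiblesBreak}. The corollary statement is simply the numerical shadow of that set-theoretic decomposition, and would occupy at most a single sentence in the written proof.
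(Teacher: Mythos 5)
Your proposal is correct and is exactly the argument the paper intends: the corollary is the cardinality shadow of the bijection in the preceding proposition, obtained by additivity over the disjoint union and multiplicativity over the product. Nothing further is needed.
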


The base case here is actually \(\mathcal D^{(n)}_n\), which is \(p_a^{-1} (\cO_n^{cpt})\). Every transfer system is compatible with the additive one. Note also that we have an important edge case: 
\[
(\mathcal F^{n-1}T)_n=(\mathcal F^n T)_n,
\]
since both correspond to the unique complete transfer system on \([n]\).

\begin{proposition}[{\cite[Theorem 20]{BBR}}]\label{prop: Count Base Case}
    For each \(n\), we have
    \[
        d(n,n)=d(n,n-1)=\Cat{n}.
    \]
\end{proposition}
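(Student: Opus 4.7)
The plan is to reduce both counts to the known enumeration \(|T_n|=\Cat{n}\) by showing that the relevant preimages of \(p_a\) are all of \(\{\cO^{cpt}_n\}\times T_n\).

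First I would verify the edge case stated in the text: \((\mathcal F^{n-1}T)_n=(\mathcal F^{n}T)_n=\{\cO^{cpt}_n\}\). Starting from \(T_0=\{\emptyset\}\) and \(T_1\) (a single element, whose only order relation is the identity), I would prove by induction on \(k\) that \(w^{k}\) applied to the unique element of \(T_1\) is the complete transfer system \(\cO^{cpt}_{k+1}\). The base case is immediate, and in the inductive step the definition of \(w\) forces \(1\to j\) for every \(j\), while \(\Phi^1\big(w(\cO^{cpt}_k)\big)=\cO^{cpt}_k\) supplies all remaining transfers \(i\to j\) with \(i\geq 2\). A parallel induction (or just one more application of \(w\)) handles the \(w^{n}(\emptyset)\) case. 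In particular \((\mathcal F^{n-1}T)_n\) and \((\mathcal F^{n}T)_n\) are both singletons consisting of the complete transfer system.

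Next I would observe that every transfer system is compatible with the complete additive one. By Proposition~\ref{prop:CompatibleCoreHull}, compatibility of \((\cO^{cpt}_n,\cO_m)\) is equivalent to \(\hull(\cO_m)\leq \cO^{cpt}_n\), which holds trivially because \(\cO^{cpt}_n\) is the maximum of the poset of transfer systems on \([n]\). Hence
\[
p_a^{-1}(\cO^{cpt}_n)=\{\cO^{cpt}_n\}\times T_n.
\]

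Combining the two steps, \(\mathcal D^{(n)}_n=\mathcal D^{(n-1)}_n=\{\cO^{cpt}_n\}\times T_n\), so \(d(n,n)=d(n,n-1)=|T_n|\). The claimed value \(\Cat{n}\) then follows from the Balchin--Barnes--Roitzheim count cited in the hypothesis. The only step requiring any real verification is the identification \((\mathcal F^{n-1}T)_n=\{\cO^{cpt}_n\}\); the rest is bookkeeping and an appeal to the known enumeration of \(T_n\).
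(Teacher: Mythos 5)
Your proposal is correct and follows the same route the paper takes (the paper's own justification is the remark preceding the proposition: the top two filtration levels consist only of the complete transfer system, every \(\cO_m\) is compatible with the complete additive one, and then one invokes the Balchin--Barnes--Roitzheim count \(|T_n|=\Cat{n}\)). Your inductive verification that \(w^k\) of a complete transfer system is complete is a reasonable way to make explicit the edge case \((\mathcal F^{n-1}T)_n=(\mathcal F^{n}T)_n=\{\cO^{cpt}_n\}\) that the paper only asserts.
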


\subsection{Rewriting the Recurrence Relation}

We now can solve the recurrence relation, giving our first proof of the main theorem. Recall the definition of the Fuss--Catalan numbers:
\[
	A_n(p,r)=\frac{r}{np+r}\binom{np+r}{n}.
\]
We begin with some helpful properties of the Fuss-Catalan number.

\begin{proposition}\label{Prop of Fuss}
For any positive integer $p$ and positive $n,r$, the following properties hold.

\begin{enumerate}
    \item\label{Splitting An(3,r)} $A_n(3,r)= \sum_{j=0}^{n} A_j(2,1)A_{n-j}(3,j+r-1)$.
    \item\label{Dropping n} $A_n(p,r)=A_n(p,r-1)+A_{n-1}(p,p+r-1)$.
    \item\label{Identifying An(3,3)} $A_{n+1}(p,1)=A_n(p,p)$.
    \item\label{Splitting at r} $A_n(p,s+r)=\sum_{j=0}^{n} A_j(p,r)A_{n-j}(p,s)$.
    \item\label{Splitting An(3,2)} $A_n(3,2)= \sum_{j=1}^{n} A_{j+1}(2,1)A_{n-j}(3,j)$.
\end{enumerate}
\end{proposition}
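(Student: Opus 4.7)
The plan is to handle all five identities in one sweep by generating functions. Fix $p\geq 1$ and let $B=B_p(x)\in\mathbb{Q}[[x]]$ be the unique power series with $B(0)=1$ satisfying
\[
    B = 1 + xB^p.
\]
The ingredient that makes everything work is the classical identity
\[
    B^r = \sum_{n\geq 0} A_n(p,r)\,x^n \qquad (r\geq 0),
\]
extending by $A_0(p,0)=1$ and $A_n(p,0)=0$ for $n\geq 1$. This is Lagrange--B\"urmann inversion: writing $y=B-1$, one has $y=x(1+y)^p$, and applying inversion with $g(y)=(1+y)^r$ gives $[x^n]B^r=\tfrac{r}{n}\binom{np+r-1}{n-1}=\tfrac{r}{np+r}\binom{np+r}{n}$. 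Once this is in hand, each of (1)--(5) becomes the $x^n$-coefficient of a short algebraic identity in $B$.

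Parts (2), (3), and (4) fall straight out of the defining equation. For (4), note $B^{s+r}=B^s\cdot B^r$ and take the Cauchy product. For (2), the identity $B^r=B^{r-1}(1+xB^p)=B^{r-1}+xB^{p+r-1}$ gives the claim on coefficients. For (3), rearranging $B-1=xB^p$ says $[x^{n+1}]B=[x^n]B^p$, which reads $A_{n+1}(p,1)=A_n(p,p)$.

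Parts (1) and (5) require one further ingredient tying together $p=2$ and $p=3$. Setting $y:=xB_3(x)$, the equation $B_3=1+xB_3^3$ rewrites as $B_3=1+yB_3^2$; since $B_2$ is uniquely characterized by $w=1+yw^2$, I conclude
\[
    B_3(x) = B_2\bigl(xB_3(x)\bigr).
\]
Multiplying by $B_3^{r-1}$ and expanding the outer $B_2$ as $\sum_j A_j(2,1)(xB_3)^j$ produces
\[
    B_3^r = \sum_{j\geq 0} A_j(2,1)\,x^j B_3^{\,j+r-1},
\]
whose $x^n$-coefficient is exactly (1). For (5), I subtract the constant and linear terms of $B_2$ from $B_2(xB_3)=B_3$ to get
\[
    \sum_{k\geq 2} A_k(2,1)(xB_3)^k = B_3-1-xB_3 = xB_3\bigl(B_3^2-1\bigr),
\]
then divide by $xB_3$ and reindex $j=k-1$ to obtain $\sum_{j\geq 1} A_{j+1}(2,1)\,x^j B_3^{\,j}=B_3^2-1$; reading the $x^n$-coefficient for $n\geq 1$ gives (5).

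The only nontrivial content is the master identity $B^r=\sum_n A_n(p,r)x^n$, which I expect to be the main obstacle — really, the main \emph{decision point}: either cite Lagrange--B\"urmann as a black box or include the one-line coefficient calculation above. Everything else is an algebraic manipulation of $B$ followed by extracting coefficients, and the use of $B_3=B_2(xB_3)$ for (1) and (5) is the one place where something beyond the defining fixed-point equation enters.
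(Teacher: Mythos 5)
Your proof is correct but takes a genuinely different route from the paper. The paper black-boxes (1)--(4) as citations to M{\l}otkowski and spends all its effort on (5), which it proves by a telescoping manipulation: expand $A_{n+1}(3,2)$ via (1), locate the sum $s$ inside, rewrite the two leftover sums as truncations of the expansions of $A_n(3,3)$ and $A_n(3,4)$, and cancel using (2) and (3). You instead derive everything from the single generating-function identity $B_p^r=\sum_n A_n(p,r)x^n$ where $B_p=1+xB_p^p$: parts (2)--(4) are one-line coefficient extractions from $B^r=B^{r-1}+xB^{p+r-1}$, $B-1=xB^p$, and $B^{s+r}=B^sB^r$, while parts (1) and (5) both fall out of the composition law $B_3(x)=B_2\bigl(xB_3(x)\bigr)$, which is the structural reason identity (1) holds at all. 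The steps check out: the Lagrange--B\"urmann computation $[x^n]B^r=\tfrac{r}{n}\binom{np+r-1}{n-1}=\tfrac{r}{np+r}\binom{np+r}{n}$ is right; the uniqueness of the power-series solution of $w=1+yw^2$ with $w(0)=1$ and $y=xB_3$ (which has no constant term) legitimately forces $B_3=B_2(xB_3)$; the division by $xB_3$ in your argument for (5) is valid since both sides are visibly multiples of it in the integral domain $\mathbb{Q}[[x]]$; and your convention $A_0(p,0)=1$, $A_n(p,0)=0$ for $n\geq1$ is exactly what the edge cases (the case $r=1$ of (2), the $j=0$ term of (1)) require. What your approach buys is a self-contained and much shorter proof of (5) --- two lines versus the paper's page of cancellations --- and a uniform explanation of all five identities; the cost is invoking Lagrange inversion (or supplying the coefficient computation you sketch) where the paper needs only binomial identities plus a citation.
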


\begin{proof}
Four of these formulae are from work of M{\l}otkowski. Formula \ref{Splitting An(3,r)} is a special case of \cite[Proposition 2.1]{Mlotkowski2010}. The proof \ref{Dropping n} is \cite[Equation 2.2]{Mlotkowski2010}, \ref{Identifying An(3,3)} is \cite[Equation 2.3]{Mlotkowski2010}, and \ref{Splitting at r} is \cite[Equation 2.4]{Mlotkowski2010}. 

We will only prove \ref{Splitting An(3,2)}. It can be done by induction on \(n\).  The base case $n=1$ is a straightforward check. Take $n\geq 2$. 

Denote $$s=\sum_{j=1}^n A_{j+1}(2,1)A_{n-j}(3,j).$$ Replacing \(j\) with \(j+1\), we can rewrite this as 
\[
s=
\sum_{j=2}^{n+1} A_{j}(2,1)A_{n-j+1}(3,j-1).
\]
Applying \ref{Dropping n} twice, we have 
\begin{align*}
A_{n-j+1}(3,j+1)&=
A_{n-j+1}(3,j)+A_{n-j}(3,j+3) \\
&=\big(A_{n-j+1}(3,j-1)+A_{n-j}(3,j+2)\big)+A_{n-j}(3,j+3).
\end{align*}

With the help of \ref{Splitting An(3,r)}, we can expand $A_{n+1}(3,2)$ to
\[
A_{n+1}(3,2)= A_{n+1}(3,1)+A_{n}(3,2)+
\sum_{j=2}^{n+1} A_{n-j+1}(3,j+1)A_j(2,1),
\]
and substituting in for \(A_{n-j+1}(3,j+1)\), we can rewrite this as
\begin{multline*}
A_{n+1}(3,2) = A_{n+1}(3,1)+A_{n}(3,2)+ s+\\ \sum_{j=2}^{n} A_{n-j}(3,j+2)A_j(2,1)
+\sum_{j=2}^{n} A_{n-j}(3,j+3)A_j(2,1).
\end{multline*} 
The last two sums also show up in the expansions of \(A_n(3,3)\) and \(A_n(3,4)\), respectively, by \ref{Splitting An(3,r)}:
%Again by (3), we can expand $A_n(3,3)$ and $A_n(3,4)$ and get that
\begin{align*} 
\sum_{j=2}^{n} A_{n-j}(3,j+2)A_j(2,1)
=A_n(3,3)-A_n(3,2)-A_{n-1}(3,3),\\
\sum_{j=2}^{n} A_{n-j}(3,j+3)A_j(2,1)
=A_n(3,4)-A_n(3,3)-A_{n-1}(3,4).
\end{align*}
This gives an equality
\begin{multline}\label{eqn: ReIdentifying s}
A_{n+1}(3,2)=A_{n+1}(3,1)+A_{n}(3,2)+s\\ +\big(A_{n}(3,3)-A_{n}(3,2)-A_{n-1}(3,3)\big)+\big(A_{n}(3,4)-A_n(3,3)-A_{n-1}(3,4)\big).
\end{multline}
We can use \ref{Identifying An(3,3)} to rewrite Equation~\ref{eqn: ReIdentifying s} as
\[
A_{n+1}(3,2)=A_{n+1}(3,1)+A_{n}(3,4)+\big(s-A_{n}(3,1)-A_{n-1}(3,4)\big).
\]
However, applying \ref{Dropping n} to \(A_{n+1}(3,2)\), we get exactly
\[
A_{n+1}(3,2)=A_{n+1}(3,1)+A_{n}(3,4),
\]
so we deduce
\[
s=A_{n}(3,1)+A_{n-1}(3,4).
\]
By \ref{Dropping n} again, we get $s=A_n(3,2)$.
\end{proof}

\begin{theorem}\label{thm number of orange}
For any $n\in \mathbb{N}$, $A_{n-1}(3,2)$ is $d(n,1)$.
\end{theorem}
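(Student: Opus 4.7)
Plan: The argument proceeds by strong induction on $n$, using the recurrence from Corollary~\ref{cor recursive formula}. The base case $n=1$ is immediate: $d(1,1)=1=A_0(3,2)$.

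For the inductive step, assume $d(m,1)=A_{m-1}(3,2)$ for all $m<n$, and expand
\[
d(n,1) = d(n,2) + \sum_{j=1}^{n-2} d(n-j,1)\,d(j,1) = d(n,2) + \sum_{j=1}^{n-2} A_{n-j-1}(3,2)\,A_{j-1}(3,2).
\]
After reindexing by $k=j-1$, the convolution runs from $k=0$ to $k=n-3$. Proposition~\ref{Prop of Fuss}(\ref{Splitting at r}) applied with $p=3$ and $r=s=2$ gives
\[
A_{n-2}(3,4) = \sum_{k=0}^{n-2} A_k(3,2)\,A_{n-2-k}(3,2),
\]
and subtracting the missing $k=n-2$ boundary term (which equals $A_{n-2}(3,2)$ since $A_0(3,2)=1$) identifies the convolution as $A_{n-2}(3,4)-A_{n-2}(3,2)$.

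The technical heart of the argument is the auxiliary identity
\[
d(n,2) = A_{n-1}(3,1) + A_{n-2}(3,2).
\]
I would prove this by a parallel strong induction on $n$, iterating the recurrence $d(n,i) = d(n,i+1) + \sum_{j=1}^{n-i-1} d(n-j,i)\,d(j,1)$ down through $i=2,3,\ldots,n-1$ to the edge case $d(n,n-1)=\Cat{n}$, collapsing the sums that appear using the identities in Proposition~\ref{Prop of Fuss}. Granting this formula, substitution into the expansion above yields
\[
d(n,1) = \bigl(A_{n-1}(3,1) + A_{n-2}(3,2)\bigr) + \bigl(A_{n-2}(3,4) - A_{n-2}(3,2)\bigr) = A_{n-1}(3,1) + A_{n-2}(3,4),
\]
and a single application of Proposition~\ref{Prop of Fuss}(\ref{Dropping n}) with $p=3$, $r=2$, and $n\mapsto n-1$ rewrites the right-hand side as $A_{n-1}(3,2)$, closing the induction.

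The main obstacle is the auxiliary identity for $d(n,2)$. Because its recurrence refers to $d(n,3)$, which in turn refers to $d(n,4)$, and so on up through $d(n,n-1)=\Cat{n}$, the cleanest route is probably a simultaneous induction on $(n,i)$ giving a closed form for $d(n,i)$ for every $i\geq 2$. Conjecturing such a formula from small cases and verifying that it satisfies the recurrence---where the Fuss--Catalan identities in Proposition~\ref{Prop of Fuss} precisely control the convolutions that appear---will be the most delicate part of the proof.
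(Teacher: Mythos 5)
Your outer framework is sound: the base case, the expansion of $d(n,1)$ via Corollary~\ref{cor recursive formula}, the identification of the convolution $\sum_{j=1}^{n-2}A_{n-j-1}(3,2)A_{j-1}(3,2)$ with $A_{n-2}(3,4)-A_{n-2}(3,2)$ via Proposition~\ref{Prop of Fuss}(\ref{Splitting at r}), and the final application of (\ref{Dropping n}) are all correct, and your auxiliary identity $d(n,2)=A_{n-1}(3,1)+A_{n-2}(3,2)$ is in fact true (it checks out for $n=2,3,4$: $2,5,19$). But that identity is the entire technical content of the theorem, and you do not prove it. You say you ``would prove this by a parallel strong induction,'' and then concede at the end that the recurrence for $d(n,2)$ drags in $d(n,3),\dots,d(n,n-1)$, so that what is really needed is a closed form for $d(n,i)$ for \emph{every} $i\geq 2$ --- a formula you have not even conjectured. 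As written, the argument is circular in spirit: the claim you ``grant'' is exactly as hard as the claim you are proving.

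For comparison, the paper's proof is essentially the systematic execution of the simultaneous induction you gesture at, but organized differently: rather than seeking a closed form for each $d(n,i)$ directly, it expresses every $d(k,l)$ as a linear combination $\sum_j c(k,l)\,d(j,j-1)$ of the known boundary values $d(j,j-1)=\Cat{j}$, and proves by a double induction (down the ``hypotenuses'' of the triangle of values) that the coefficient of $d(j,j-1)$ in $d(j+l,j-i)$ is $A_l(3,i)$. This yields $d(n,1)=\sum_{j=2}^{n}A_{n-j}(3,j-1)A_j(2,1)$, which is then closed up by the bespoke identity Proposition~\ref{Prop of Fuss}(\ref{Splitting An(3,2)}) --- an identity the paper proves precisely because this step needs it. If you want to complete your route instead, you would need to state and prove the analogue for all $i$, e.g.\ $d(n,i)=\sum_{j=i+1}^{n}A_{n-j}(3,j-i)\Cat{j}$, verify it satisfies the recurrence using (\ref{Dropping n}) and (\ref{Splitting at r}), and then derive your $d(n,2)$ formula from it; until that is done, the proof has a genuine gap at its center.
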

Before proving the theorem, we first discuss the strategy of the proof. 
Instead of focusing on the recurrence steps for a specific $n$, we put all $d(k,j)$ together which forms a triangle as follows.

\begin{figure}[H]
    \centering
    \begin{tikzpicture}
%        \draw[orange] (-0.5,-0.3) rectangle (7.0,1.8);

        \node at (0,0) {$d(2,1)$};
        \node at (0,-0.5) {$d(3,1)$};
        \node at (0,-1) {$d(4,1)$};
        \node at (0,-1.5) {$d(5,1)$};
        \node at (0,-2) {...};
        
        \node at (1.5,-0.5) {$d(3,2)$};
        \node at (1.5,-1) {$d(4,2)$};
        \node at (1.5,-1.5) {$d(5,2)$};
        
        \node at (3,-1) {$d(4,3)$};
        \node at (3,-1.5) {$d(5,3)$};
        
        \node at (4.5,-1.5) {$d(5,4)$};

    \end{tikzpicture}

\end{figure}

Recall Corollary \ref{cor recursive formula}. For $i\in \mathbb{N}_{+}$, the elements in the set $\{d(j+i, j): j\geq 1\}$ share the same recursive formula.
In fact, we can generate the above triangle hypotenuse by hypotenuse from outside to inside. To begin with, we generate the second outermost hypotenuse $d(3,1)$, $d(4,2)$, $d(5,3)$... by the outermost hypotenuse $d(2,1)$, $d(3,2)$, $d(4,3)$... according to the recursive formula for $d(n,n-2)$. In other word, $d(n,n-2)$ is a linear combination of $d(j,j-1)$ with coefficients 1 or $d(1,1)$.
Similarly, we can generate the third outermost hypotenuse by the outermost and the second outermost hypotenuse according to the recursive formula for $d(n,n-3)$. Now the coefficients are from the space spanned by $d(2,1)$, $d(1,1)$ and 1. 

As the whole triangle can be generated hypotenuse by hypotenuse, we can conclude that any $d(k,l)$ in the triangle can be written as a linear combination of $d(j, j-1)$ where $l+1\leq j\leq k$ with coefficients from the space spanned by $1$ and $d(i,1)$ where $1\leq i\leq k-l-1$. Graphically, $d(k,l)$ is a linear combination $d(j,j-1)$ in the blue area.

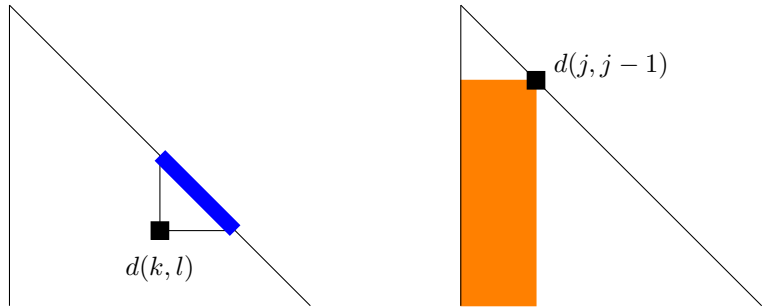
\begin{figure}[H]
    \centering
    \begin{tikzpicture}

        \draw (0,0) -- (0,4);
        \draw (0,4) -- (4,0);
        \node [fill, draw] at (2,1) {};
        \draw (2,1) -- (2,2);
        \draw (2,1) -- (3,1);
        \node at (2, 0.5) {$d(k,l)$};
        \draw[line width=0.2cm, blue] (2,2) -- (3,1);
        
        \draw[fill=orange, orange] (7,3) rectangle (6,0);

        \draw (6,0) -- (6,4);
        \draw (6,4) -- (10,0);
        \node [fill, draw] at (7,3) {};
        \node at (8, 3.2) {$d(j,j-1)$};

    \end{tikzpicture}
    \label{fig: Orienting the double induction}
    \caption{Showing what contributes to each term: \(d(k,l)\) is a linear combination of \(d(j,j-1)\) from the blue shaded region. And a given \(d(j,j-1)\) contributes only in the orange region.}
\end{figure}

\begin{comment}
 In fact, we can view $d(j, j-1)$ as building blocks of all elements in the triangle. For a specific $d(j, j-1)$, it participates in the generation of $d(k,l)$ in the orange area. In other word, the coefficients of $d(j, j-1)$ is not zero only for $d(k,l)$ if $k\geq j$ and $l\leq j-1$.
\end{comment}

\begin{proof}[Proof of Theorem~\ref{thm number of orange}]
It is easy to check for $d(2,1)$. Then by induction, it suffices to show the theorem is true for $d(n,1)$ if the theorem holds for $d(i,1)$ for any $2\leq i\leq n-1$.

Fix $j$ such that $2\leq j\leq n$. For our convenience, we denote the coefficient of $d(j,j-1)$ for $d(k,l)$ as $c(k,l)$. Our goal is to determine $c(n, 1)$. We will achieve this by induction. 

First, we figure out the coefficients of $d(j,j-1)$ for the vertical line $d(j+l, j-1)$ where $0\leq l\leq n-j$. We know $c(j, j-1)=1=A_0(3,1)$ and for $0\leq l\leq n-j$,
$$c(j+l,j-1)=\sum_{i=1}^{l} d(i,1)\, c(j+l-i,j-1).$$
As $d(i,1)$ is $A_{i-1}(3,2)$ for $2\leq i\leq n-1$, we get $c(j+l, j-1)=A_{l}(3,1)$ by \ref{Splitting at r} of Proposition \ref{Prop of Fuss} and the equality \(A_n(3,3)=A_{n+1}(3,1)\) for $0\leq l\leq n-j$ inductively. 

In fact, we claim that for $1\leq i\leq j-1$, $c(j+l, j-i) = A_l(3,i)$ for $0\leq l\leq n-j$. We have shown the case for $i=1$. Now it is sufficient to show the case $k$ provided the claim holds for cases $1\leq i\leq k-1$. Similarly, we know $c(j, j-k)=1=A_0(3,k)$ and for $0\leq l\leq n-j$,
$$c(j+l,j-k)=c(j+l, j-(k-1))+\sum_{i=1}^{l} d(i,1)\, c(j+l-i,j-k).$$
As $d(i,1)$ is $A_{i-1}(3,2)$ for $2\leq i\leq n-1$, we get $c(j+l, j-k)=A_{l}(3,k)$ by \ref{Dropping n} and \ref{Splitting at r} of Proposition \ref{Prop of Fuss} for $0\leq l\leq n-j$ inductively. Thus, we get $c(n,1)=A_{n-j}(3,j-1)$. As $d(n,1)$ depends only on $d(j,j-1)$ for $2\leq j\leq n$,
$$
d(n,1)=\sum_{j=2}^{n} A_{n-j}(3,j-1)d(j,j-1)=\sum_{j=2}^{n}A_{n-j}(3,j-1) A_{j}(2,1),
$$ where the last equality is 
Proposition~\ref{prop: Count Base Case}. By \ref{Splitting An(3,2)} of Proposition \ref{Prop of Fuss}, we conclude $d(n,1)=A_{n-1}(3,2)$.
\end{proof}

\begin{theorem}
There are $A_{n}(3,1)$ compatible transfer systems for $[n]$.
\end{theorem}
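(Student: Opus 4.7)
The strategy is to finish off the recursion from Corollary~\ref{cor recursive formula} at its base level \(i=0\). Setting \(i=0\) gives
\[
d(n,0)=d(n,1)+\sum_{j=1}^{n-1} d(n-j,0)\, d(j,1),
\]
and Theorem~\ref{thm number of orange} supplies the closed form \(d(j,1)=A_{j-1}(3,2)\). I would prove \(|\mathcal D^{(0)}_n|=d(n,0)=A_n(3,1)\) by strong induction on \(n\), taking as base cases \(n=0\) and \(n=1\), where each \(T_n\) is a singleton and \(A_0(3,1)=A_1(3,1)=1\).

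For the inductive step, substitute the inductive hypothesis \(d(n-j,0)=A_{n-j}(3,1)\) together with \(d(j,1)=A_{j-1}(3,2)\) and reindex \(m=j-1\). The stray term \(d(n,1)=A_{n-1}(3,2)\) is precisely the missing \(m=n-1\) summand \(A_0(3,1)\,A_{n-1}(3,2)\), so the recursion collapses to the convolution
\[
d(n,0)=\sum_{m=0}^{n-1} A_{n-1-m}(3,1)\,A_m(3,2).
\]
The theorem then reduces to the single Fuss--Catalan identity
\[
A_n(3,1)=\sum_{m=0}^{n-1} A_{n-1-m}(3,1)\,A_m(3,2).
\]

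This identity is immediate from the toolkit already in Proposition~\ref{Prop of Fuss}: property \ref{Splitting at r} with \(p=3\), \(r=1\), \(s=2\), applied at index \(n-1\), recognizes the right-hand side as \(A_{n-1}(3,3)\); and property \ref{Identifying An(3,3)} with \(p=3\) then rewrites \(A_{n-1}(3,3)=A_n(3,1)\). I do not anticipate any genuine obstacle; the only thing to watch is the bookkeeping of the two small index shifts (the ``\(-1\)'' built into Theorem~\ref{thm number of orange}, and the one absorbing the \(d(n,1)\) term into the convolution sum), after which the result falls directly out of the Fuss--Catalan properties already collected.
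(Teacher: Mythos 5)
Your proposal is correct and follows essentially the same route as the paper: induct on \(n\), feed Theorem~\ref{thm number of orange} and the inductive hypothesis into the \(i=0\) case of Corollary~\ref{cor recursive formula}, absorb the stray \(d(n,1)\) term into the convolution, and close with properties \ref{Splitting at r} and \ref{Identifying An(3,3)} of Proposition~\ref{Prop of Fuss}. The index bookkeeping checks out, so there is nothing to add.
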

\begin{proof}
We will prove this by induction. It is easy to check the case for $[1]$. Assume the theorem is true for $[i]$ where $1\leq i<n$. We want to show it is true for $[n]$.

Corollary~\ref{cor recursive formula} shows that the number of compatible systems for $[n]$ is 
\[
d(n,0)=d(n,1)+\sum_{j=1}^{n-1}d(j,0) d(n-j,1).
\]
Theorem~\ref{thm number of orange} and the inductive hypothesis let us rewrite this as
\[
d(n,0)=A_{n-1}(3,2)+\sum_{j=1}^{n-1} A_{j}(3,1)A_{n-j-1}(3,2)=\sum_{j=0}^{n-1} A_{j}(3,1)A_{n-j-1}(3,2).
\]
By \ref{Splitting at r} of Proposition~\ref{Prop of Fuss}, we deduce
\[
d(n,0)=A_{n-1}(3,3),
\] 
which by \ref{Identifying An(3,3)} of Proposition~\ref{Prop of Fuss}, equals $A_{n}(3,1)$. 
\end{proof}

\section{Extensions of saturated systems}\label{sec:Extensions}
\subsection{Counting using additive cores}
Instead of using the filtration by powers of \(w\) on the additive indexing system, we can use the first part of Proposition~\ref{prop:CompatibleCoreHull}. This says that compatibility of \((\cO_a,\cO_m)\) is the same question as compatibility of \((\core\cO_a,\cO_m)\), since both reduce to the comparison
\[
    \cO_m\leq\core\cO_a.
\]
Since the core breaks up as a direct some of complete transfer systems, this last condition is really the same as asking that if 
\[
    \core\cO_a=\cO_{n_1}^{cpt}\oplus\dots\oplus\cO_{n_k}^{cpt},
\]
then we have a direct sum decomposition
\[
    \cO_m=\cO_{n_1}'\oplus\dots\oplus\cO_{n_k}',
\]
where \(\cO_{n_i}'\in T_{n_i}\) for all \(i\). This is the only condition here, so we deduce the following proposition.

\begin{proposition}\label{prop:CountByCores}
    If \(\cO_a\) is a transfer system with
    \[
        \core\cO_a=\cO_{n_1}^{cpt}\oplus\dots\oplus\cO_{n_k}^{cpt},
    \]
    then there are
    \[
        \prod_{i=1}^k \Cat{n_i}
    \]
    transfer systems \(\cO_m\) such that the pair \((\cO_a,\cO_m)\) is compatible.
\end{proposition}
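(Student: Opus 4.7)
The strategy is to translate the counting problem through the equivalent characterization of compatibility given by Proposition~\ref{prop:CompatibleCoreHull}(1): the pair \((\cO_a,\cO_m)\) is compatible if and only if \(\cO_m\leq\core\cO_a\) in the poset \(T_n\). Since \(\core\cO_a\) depends only on its hypothesized decomposition, the count of compatible \(\cO_m\) becomes a count of transfer systems that sit below a fixed saturated transfer system.

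First I would fix the decomposition \(\core\cO_a=\cO_{n_1}^{cpt}\oplus\dots\oplus\cO_{n_k}^{cpt}\) and unpack what \(\cO_m\leq\core\cO_a\) means at the level of individual arrows. Any arrow \(i\toO{m}j\) must also be an arrow of \(\core\cO_a\); by the direct sum structure, the arrows of \(\core\cO_a\) connect only indices lying in a common block, so every arrow of \(\cO_m\) is confined to a single block of the decomposition. This is exactly the condition needed to write \(\cO_m=\cO_{n_1}'\oplus\dots\oplus\cO_{n_k}'\) where each \(\cO_{n_i}'\) is the restriction of \(\cO_m\) to the \(i\)-th block, viewed as a transfer system on \([n_i]\). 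Conversely, any tuple \((\cO_{n_1}',\dots,\cO_{n_k}')\) with \(\cO_{n_i}'\in T_{n_i}\) assembles via \(\oplus\) into a transfer system \(\cO_m\leq\core\cO_a\), because \(\cO_{n_i}'\leq\cO_{n_i}^{cpt}\) automatically.

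This establishes a bijection between the set of \(\cO_m\) with \((\cO_a,\cO_m)\) compatible and the product \(T_{n_1}\times\dots\times T_{n_k}\). Applying the Balchin--Barnes--Roitzheim count \(|T_{n_i}|=\Cat{n_i}\) gives the claimed cardinality \(\prod_{i=1}^k \Cat{n_i}\). The only subtle point I anticipate is verifying that restricting \(\cO_m\) to a block indeed produces a bona fide transfer system on \([n_i]\) and that the blockwise assembly is the inverse operation, but both of these follow immediately from the definitions of \(i_k^\ast\), \(\Phi^k\), and \(\oplus\) together with the observation that the blocks of \(\core\cO_a\) are intervals in \([n]\).
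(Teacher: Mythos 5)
Your proposal is correct and follows essentially the same route as the paper: reduce compatibility to the condition \(\cO_m\leq\core\cO_a\) via Proposition~\ref{prop:CompatibleCoreHull}, observe that this forces \(\cO_m\) to decompose blockwise as \(\cO_{n_1}'\oplus\dots\oplus\cO_{n_k}'\) with each \(\cO_{n_i}'\in T_{n_i}\) arbitrary, and then apply \(|T_{n_i}|=\Cat{n_i}\). The extra care you take in checking that blockwise restriction and assembly are mutually inverse is a reasonable elaboration of what the paper leaves implicit.
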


This reduces our question of the number of compatible pairs to two parts:
\begin{enumerate}
    \item Enumerate all of the transfer systems with a fixed core, then
    \item evaluate the corresponding sum.
\end{enumerate}

\begin{notation}
    Let \(\vec{k}=(k_1,\dots,k_n)\) be a sequence of positive integers. 
    For each \(1\leq s\leq n\), let
    \[
        K_s=k_1+\dots+k_s.
    \]
    Let
    \[
        \cO_{\vec{k}}^{sat}=\cO_{k_1}^{cpt}\oplus\dots\oplus\cO_{k_n}^{cpt}.
    \]
\end{notation}

\begin{definition}
	For a sequence \(\vec{k}=(k_1,\dots,k_n)\) with \(k=k_1+\dots+k_n\), let
	\[
		\mathcal E_{\vec{k}} = \big\{\cO\in T_k\mid\core{\cO}=\cO_{\vec{k}}^{sat}\big\},
	\]
	and let
	\[
		e_{\vec{k}} = |\mathcal E_{\vec{k}}|.
	\]
\end{definition}

\subsection{Catalan tuples}
The enumeration and exact sum were analyzed by de Jong, Hock, and Wulkenhaar  \cite{MR4407998} in a slightly different guise. They consider certain sequences which they call Catalan tuples.
\begin{definition}[{\cite[Definition 3.1]{MR4407998}}]\label{def: Catalan Tuple}
    For each positive integer \(n\), a {\emph{Catalan tuple of length \(n\)}} is a a sequence of non-negative integers
    \[
        \vec{s}=(s_0,\dots,s_r)
    \]
    with three properties:
    \begin{enumerate}
        \item for all \(j\), we have
        \[
            \sum_{i=0}^{j}s_i >j,
        \]
        \item at the end,
        \[
            \sum_{i=0}^{r}s_i=n,
        \]
        \item and if \(n>0\), then \(s_r>0\).
\end{enumerate}
    Let \(\mathcal S_n\) be the set of all Catalan tuple of length \(n\).
\end{definition}
\begin{remark}
    We have slightly modified the definition here to ignore trailing zeros. This removes our ability to predict the length of the string, but it will better connect with the extensions.
\end{remark}

We can restate the conditions in Definition~\ref{def: Catalan Tuple} slightly to start connecting with extensions. 

\begin{definition}
    The \emph{excess} of a Catalan tuple \(\vec{s}=(s_1,\dots,s_r)\) of length \(n\) is 
    \[
    e(\vec{s}):=n-r-1.
    \]
\end{definition}

\begin{remark}
    Note that the edge condition in Definition~\ref{def: Catalan Tuple} of \(j=r\) implies the inequality \(n>r\). This means the excess is always non-negative.
\end{remark}

\begin{proposition}\label{prop: Extending Catalan Tuples}
    Let \(\vec{s}\) be a Catalan tuple of length \(n\) and excess \(e=n-r-1\). Then for any \(k>0\), the sequence
    \[
        \vec{s}_{k}(\ell)=(s_0,\dots,s_r,\underbrace{0,\dots,0}_{\ell},k)
    \]
    is a Catalan tuple if and only if 
    \[
        0\leq \ell\leq e.
    \]
\end{proposition}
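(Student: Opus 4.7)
The plan is to verify each of the three conditions in Definition~\ref{def: Catalan Tuple} directly against $\vec{s}_k(\ell)$, separating the indices $j$ into three ranges: those inherited from $\vec{s}$, those lying in the appended block of zeros, and the final position. Since conditions (2) and (3) are immediate (the new total sum is $n+k$, which determines the length; the last entry is $k>0$ by hypothesis), the content of the statement is entirely in the partial-sum inequality (1).

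First I would write $\vec{s}_k(\ell) = (t_0,\dots,t_{r+\ell+1})$ explicitly and compute the partial sums $T_j = \sum_{i=0}^{j} t_i$. For $0\le j\le r$ the partial sum $T_j$ agrees with the partial sum of $\vec{s}$, so the inequality $T_j>j$ holds automatically since $\vec{s}$ is already a Catalan tuple. For $r+1\le j\le r+\ell$ (a range which is empty when $\ell=0$) the entries added are all zeros, so $T_j = n$, and the inequality $T_j>j$ becomes $n>j$; the binding case is $j=r+\ell$, giving $\ell \le n-r-1 = e$. For the final position $j=r+\ell+1$ one has $T_j = n+k$, and the inequality $n+k > r+\ell+1$ follows from $\ell\le e$ together with $k>0$, since then $r+\ell+1\le r+e+1 = n < n+k$.

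For the converse, if $\ell > e$, then evaluating condition (1) at $j=r+\ell$ (with $\ell\ge 1$, since $e\ge 0$ forces the comparison to bite only when $\ell$ actually exceeds $e$) gives $T_j = n$ while $j \ge n$, violating the strict inequality. This shows $\ell \le e$ is necessary.

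I do not expect a real obstacle here: the proposition is essentially a bookkeeping check of the partial-sum condition after padding and appending. The only subtlety worth flagging is the edge case $\ell=0$, where the middle range of indices is empty and one verifies only the endpoint inequality at $j=r+1$, which is guaranteed by $k>0$ and $n>r$ (the latter being the excess-nonnegativity remark already recorded just before the proposition).
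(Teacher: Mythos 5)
Your proof is correct and follows essentially the same route as the paper's: both split the index range into the entries inherited from \(\vec{s}\), the appended zeros (where the partial sum stalls at \(n\) and the binding inequality \(n>r+\ell\) yields \(\ell\leq e\)), and the final entry \(k\). Your version is slightly more explicit about the trivial conditions (2) and (3) and about the converse direction, but the core computation is identical.
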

\begin{proof}
    For \(0\leq j\leq r\), the Catalan tuple condition holds since it does for \(\vec{s}\). If \(\ell=0\), then we have 
    \[
        \sum_{i=0}^{r} s_i+k=n+k>r+1,
    \]
    since \(n>r\) and \(k\geq 1\). Assume now that \(\ell>0\), and consider a \(1\leq j\leq \ell\). We have
    \[
        \sum_{i=0}^{r+j} s_i=n=r+e+1.
    \]
    On the other hand, this is greater than \(r+j\) if and only if \(j<e+1\). This gives the bounds on \(\ell\). Finally, note that the analysis for the case \(\ell=0\) now also implies the case \(j=\ell+1\).
\end{proof}

\begin{proposition}\label{prop: Excess of Catalan Extensions}
    Let \(\vec{s}\) be a Catalan tuple of length \(n\) and excess \(e=n-r-1\). Then for any \(k>0\) and \(0\leq \ell\leq e\), the excess of the Catalan tuple
    \[
        \vec{s}_{k}(\ell)=(s_0,\dots,s_r,\underbrace{0,\dots,0}_{\ell},k)
    \]
    is 
    \[
        e\big(\vec{s}_{k}(\ell)\big)=k-1+e-\ell=n+k-(r+\ell+2).
    \]
\end{proposition}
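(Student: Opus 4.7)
The plan is to simply unwind the definition of excess applied to $\vec{s}_k(\ell)$ and compare with the data of $\vec{s}$.

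First I would record the two pieces of numerical data attached to $\vec{s}_k(\ell)$. Since $\vec{s} = (s_0,\dots,s_r)$ has sum $n$, the tuple $\vec{s}_k(\ell)$ has total sum $n+k$; this is the ``$n$'' appearing in the definition of excess for the extended tuple. Next, the index of the last entry of $\vec{s}_k(\ell)$ is $r + \ell + 1$ (we appended $\ell$ zeros and then one entry $k$), so this is the ``$r$'' for the extended tuple. Note that the previous proposition (Proposition~\ref{prop: Extending Catalan Tuples}) guarantees that $\vec{s}_k(\ell)$ is genuinely a Catalan tuple in the allowed range $0\le\ell\le e$, and the final entry $k$ is positive, so the definition of excess applies.

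Then I would substitute into the definition
\[
    e\big(\vec{s}_k(\ell)\big) = (n+k) - (r+\ell+1) - 1 = (n-r-1) + k - \ell - 1 = e + k - \ell - 1,
\]
using $e = n-r-1$ in the second equality. Rearranging gives the two displayed forms $k-1+e-\ell$ and $n+k-(r+\ell+2)$.

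There is essentially no obstacle here: the statement is a bookkeeping consequence of the definitions of length, sum, and excess. The only care needed is in identifying the correct new value of the index ``$r$'' for the extended tuple, since the convention drops trailing zeros but the $\ell$ zeros here are followed by the positive entry $k$, so they do count toward the length.
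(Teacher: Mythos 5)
Your proof is correct and follows essentially the same route as the paper: compute the new sum ($n+k$) and the new top index ($r+\ell+1$, since $\ell+1$ entries are appended), then substitute into the definition of excess. Your explicit remark that the appended zeros count toward the index because they precede the positive final entry $k$ is a nice clarification the paper leaves implicit.
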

\begin{proof}
    The Catalan tuple given has length \(n+k\). The sequence \(\vec{s}\) has length \((r+1)\), and we added \(\ell+1\) new terms to form \(\vec{s}_k(\ell)\).
\end{proof}

This lets us rewrite Catalan tuples using only the non-zero entries.

\begin{definition}
    If \(\vec{s}\) is a Catalan tuple, then let \(\core(\vec{s})\) be the subsequence of non-zero entries of \(\vec{s}\). Given a partition \(\vec{k}\) of \(k\), let \(\mathcal S_{\vec{k}}\) be the subset of \(\mathcal S_k\) of Catalan tuples with core \(\vec{k}\):
    \[
        \mathcal S_{\vec{k}}:=\{\vec{s}\in\mathcal S_k\mid \core{\vec{s}}=\vec{k}\}.
    \]
\end{definition}

\begin{corollary}
    Catalan tuples with core \(\vec{k}\) are those sequences of the form
    \[
        k_1,\underbrace{0,\dots,0}_{\ell_1},k_2,\underbrace{0,\dots,0}_{\ell_2},k_3,\dots,k_{n-1},\underbrace{0,\dots,0}_{\ell_{n-1}} k_n
    \]
    such that for all \(1\leq j\leq n-1\), we have
    \[
        \sum_{i=1}^{j} (k_i-\ell_i)\geq j.
    \]

    The excess of such a sequence is
    \[
        (k_n-1)+\sum_{i=1}^{n-1}(k_i-1-\ell_i).
    \]
\end{corollary}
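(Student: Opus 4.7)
The plan is to observe that both parts of the corollary follow by unwinding Definition~\ref{def: Catalan Tuple} once the positions of the nonzero entries are fixed. By definition of core, a Catalan tuple with core \(\vec{k}\) has exactly the nonzero entries \(k_1,\ldots,k_n\), appearing in that order, with the only freedom being the numbers \(\ell_1,\ldots,\ell_{n-1}\) of zeros inserted between consecutive \(k_i\)'s. I would first use condition~(1) at \(j=0\) to force \(s_0\neq 0\), so that no zeros precede \(k_1\), and condition~(3) to rule out trailing zeros after \(k_n\). This pins down the displayed shape of the sequence.

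Next I would translate the family of inequalities \(\sum_{i=0}^j s_i>j\) into the single stated condition. The key observation is that the partial sum \(\sum_{i=0}^j s_i\) is constant on each run of zeros, equal to \(K_m\) throughout the zero block immediately following \(k_m\). Since \(j\) is increasing along this block, the strongest instance of condition~(1) in that block occurs at its last position, namely the index just before \(k_{m+1}\) appears. Expressing that index in terms of the \(\ell_i\)'s and rearranging the resulting inequality \(K_m>j\) produces exactly \(\sum_{i=1}^m(k_i-\ell_i)\geq m\). I would then note that the constraint corresponding to \(m=n\) is automatically implied by the one at \(m=n-1\) together with \(k_n\geq 1\), so only the cases \(1\leq m\leq n-1\) are substantive, matching the statement.

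For the excess, the total length of \(\vec{s}\) is \(K_n\), while the maximal index equals \(r=(n-1)+\sum_{i=1}^{n-1}\ell_i\), since the sequence has \(n\) nonzero entries and \(\sum_{i=1}^{n-1}\ell_i\) zero entries. Substituting into \(e(\vec{s})=K_n-r-1\) and collecting terms gives \((k_n-1)+\sum_{i=1}^{n-1}(k_i-1-\ell_i)\), as claimed. I do not expect any serious obstacle here; the argument is a bookkeeping exercise matching positions in the tuple to the defining inequalities, and the only delicate point is the \(0\)-indexed versus \(1\)-indexed position accounting, together with the verification that the \(m=n\) case of condition~(1) is redundant.
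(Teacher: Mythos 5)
Your argument is correct, but it takes a different route from the one the paper intends. The paper gives no explicit proof and presents the statement as a consequence of the two preceding results: one applies Proposition~\ref{prop: Extending Catalan Tuples} and Proposition~\ref{prop: Excess of Catalan Extensions} inductively, appending the block \((0,\dots,0,k_{j+1})\) with \(\ell_j\) zeros to the prefix with core \((k_1,\dots,k_j)\); the constraint \(0\leq\ell_j\leq e\) of the prefix, combined with the running excess formula \(e=(k_j-1)+\sum_{i<j}(k_i-1-\ell_i)\), rearranges to exactly \(\sum_{i=1}^{j}(k_i-\ell_i)\geq j\). You instead verify the claim directly from Definition~\ref{def: Catalan Tuple}: you locate the binding instances of the partial-sum inequality at the ends of the zero runs, note that the inequality at any index with a positive entry follows from the one at the previous index, and compute the excess by counting positions. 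Both arguments are elementary bookkeeping; the paper's inductive version is what makes the corollary literally a corollary and is the form reused later in the bijection \(\sigma\), while yours is self-contained and makes the index accounting explicit. The one point to state a bit more carefully in your write-up is the sufficiency direction: besides observing that the strongest constraint in a zero block sits at its last position, you should record (as you implicitly do for the \(m=n\) case) that the inequality at the index of each nonzero entry \(k_{m+1}\) follows from the inequality at the preceding index together with \(k_{m+1}\geq 1\), so that the \(n-1\) displayed inequalities really do imply condition (1) at every index.
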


In their work, de Jong, Hock, and Wulkenhaar consider certain collections of Catalan tuples.
\begin{definition}[{\cite[Definition 4.1]{MR4407998}}]
    A {\emph{nested Catalan tuple of length \(n\)}} is a sequence of Catalan tuples \((\vec{s}_{i_1},\dots,\vec{s}_{i_r})\) such that \(\vec{s}_{i_j}\in\mathcal S_{i_j}\) and the sequence 
    \[
        (i_1+1,i_2,\dots,i_r)
    \]
    is a Catalan tuple of length \(n\).
\end{definition}

A key result in \cite{MR4407998} is the cardinality of the number of nested Catalan tuple that begin with \((0)\). For this, we need a straightforward lemma.
\begin{lemma}
    The map \(\Sigma\colon \mathcal S_n\to \mathcal S_{n+1}\) given by
    \[
        (s_0,\dots,s_r)\mapsto (1,s_0,\dots,s_r)
    \]
    is an injection with image those sequences which begin with \(1\).
\end{lemma}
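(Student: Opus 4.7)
The plan is a direct verification of the three Catalan-tuple conditions, run once for $\Sigma$ and once for its candidate inverse. The obvious candidate is the truncation $\tau\colon (t_0,t_1,\dots,t_r)\mapsto(t_1,\dots,t_r)$, and the whole argument reduces to checking that prepending $1$ and its inverse operation both interact well with the running-sum inequality $\sum_{i\le j}s_i>j$.

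First I would check that $\Sigma$ lands in $\mathcal S_{n+1}$. Given $\vec s=(s_0,\dots,s_r)\in \mathcal S_n$ and $\vec t=\Sigma(\vec s)=(1,s_0,\dots,s_r)$, the total sum becomes $1+n=n+1$, and the last entry of $\vec t$ is $s_r$, still positive when $n>0$. The running-sum inequality holds at $j=0$ because $t_0=1>0$, and for $j\ge 1$ the inequality $1+\sum_{i=0}^{j-1}s_i>j$ is equivalent to the original inequality $\sum_{i=0}^{j-1}s_i>j-1$ for $\vec s$. The relation $\tau\circ\Sigma=\mathrm{id}_{\mathcal S_n}$ is immediate, which gives injectivity of $\Sigma$ and shows its image is contained in the subset of $\mathcal S_{n+1}$ beginning with $1$. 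For the reverse containment, given $\vec t\in\mathcal S_{n+1}$ with $t_0=1$, I would verify $\tau(\vec t)\in\mathcal S_n$ by the same shift of indices: the sum drops to $n$, the last entry is unchanged, and the condition $\sum_{i=0}^{j}(\tau\vec t)_i>j$ is the Catalan condition on $\vec t$ applied at index $j+1$. Then $\Sigma(\tau(\vec t))=\vec t$ exhibits $\vec t$ in the image.

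There is no real obstacle beyond the bookkeeping; the whole content is that prepending $1$ shifts all the running-sum inequalities by exactly one index in a reversible way. The one point that deserves attention is the edge case $n=0$, where $\vec s$ is the empty tuple and $\Sigma(\vec s)=(1)\in\mathcal S_1$; the related observation that a nonempty Catalan tuple always has first entry at least $1$ (by the $j=0$ inequality) confirms that the condition $t_0=1$ is the correct cut that isolates the image of $\Sigma$.
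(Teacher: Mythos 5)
Your proof is correct and is exactly the routine index-shift verification the paper has in mind (the paper states this lemma without proof, calling it straightforward). Both directions, the inverse truncation map, and the $n=0$ edge case are handled properly.
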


\begin{proposition}[{\cite[Corollary 4.6]{MR4407998}}]
    The number of nested Catalan tuples of length \((n+1)\) with first term \((0)\) is
    \[
        \sum_{\vec{s}\in\mathcal S_n} \prod_i\Cat{s_i}=\frac{1}{2n+1}\binom{3n}{n}=\frac{1}{3n+1}\binom{3n+1}{n}=A_n(3,1).
    \]
\end{proposition}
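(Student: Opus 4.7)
The plan is to handle the three displayed equalities in turn, with the middle one carrying the substantive combinatorial content.

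First I would unpack the definition. A nested Catalan tuple $(\vec{s}_{i_1}, \vec{s}_{i_2}, \ldots, \vec{s}_{i_r})$ of length $n+1$ with $\vec{s}_{i_1}=(0)$ forces $i_1=0$, so $(i_1+1, i_2,\ldots, i_r)=(1, i_2,\ldots, i_r)$ must lie in $\mathcal S_{n+1}$. The preceding lemma identifies such outer tuples beginning with $1$ bijectively with elements of $\mathcal S_n$ via $\Sigma$. Once the outer tuple $(i_2,\ldots,i_r)\in\mathcal S_n$ is fixed, each inner $\vec{s}_{i_j}\in\mathcal S_{i_j}$ can be chosen independently. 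Combined with the standard identification $|\mathcal S_m|=\Cat{m}$ (Catalan tuples of length $m$ are in bijection with plane trees on $m+1$ nodes via pre-order degree sequences), the total count becomes $\sum_{\vec{s}\in\mathcal S_n}\prod_i\Cat{s_i}$, giving the first equality.

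Next I would establish $\sum_{\vec{s}\in\mathcal S_n}\prod_i\Cat{s_i}=A_n(3,1)$ via generating functions. Set $P(x)=\sum_{n\geq 0} g_n x^n$ where $g_n$ denotes the displayed sum, and let $C(x)=\sum_n\Cat{n} x^n$, which satisfies $C=1+xC^2$. Under the plane tree bijection above, $g_n$ counts plane trees on $n+1$ nodes weighted by $\prod_v\Cat{\deg v}$. Decomposing such a tree at its root, which has some number $k\geq 0$ of child subtrees, yields
\[
P=\sum_{k\geq 0}\Cat{k}(xP)^k = C(xP).
\]
Substituting into $C=1+xC^2$ then gives $P=1+xP\cdot P^2=1+xP^3$, which is precisely the functional equation satisfied by the generating function for the Fuss--Catalan numbers $A_n(3,1)$. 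Matching coefficients then yields $g_n=A_n(3,1)$.

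Finally, the binomial rewrites $A_n(3,1)=\frac{1}{3n+1}\binom{3n+1}{n}=\frac{1}{2n+1}\binom{3n}{n}$ are immediate from the definition of $A_n(p,r)$ together with the identity $\binom{3n+1}{n}=\frac{3n+1}{2n+1}\binom{3n}{n}$.

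The main obstacle I anticipate is making sure the plane-tree bijection is correctly weight-preserving so that the decomposition $P=C(xP)$ can be read off unambiguously; this is where the real combinatorial content sits. If this becomes awkward, one can bypass it entirely by citing \cite[Corollary 4.6]{MR4407998}, which proves the identity $g_n=A_n(3,1)$ directly by Lagrange inversion.
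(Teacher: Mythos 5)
Your argument is correct, but it is genuinely different from what the paper does: the paper offers no proof at all here, simply importing the statement from \cite[Corollary 4.6]{MR4407998}, whereas you supply a self-contained derivation. Your treatment of the first equality — using $i_1=0$ together with the preceding lemma on $\Sigma$ to identify the outer tuple with an element of $\mathcal S_n$, and then choosing the inner tuples independently with $|\mathcal S_m|=\Cat{m}$ — is exactly the unpacking the paper's setup is designed to support, so that part is essentially forced. The substantive addition is your generating-function proof of $\sum_{\vec{s}\in\mathcal S_n}\prod_i\Cat{s_i}=A_n(3,1)$: transporting the weighted sum to plane trees (where the discarded trailing zeros contribute harmless factors of $\Cat{0}=1$), reading off $P=C(xP)$ from the root decomposition, and substituting into $C=1+xC^2$ to get $P=1+xP^3$, the defining equation for the $A_n(3,1)$ generating function. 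This replaces the Lagrange-inversion computation of the cited source with a short algebraic manipulation of functional equations; what it costs is the need to verify carefully that the tree bijection is weight-preserving (which you flag, and which does go through). The final binomial identities are routine. In short: a complete and correct proof where the paper has only a citation.
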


We will produce an explicit bijection
\[
    \sigma\colon T_n\to\mathcal S_n
\]
by building bijections between \(\mathcal E_{\vec{k}}\) and \(\mathcal S_{\vec{k}}\).

\subsection{Enumerating Extensions by a complete transfer system}

It is helpful to think of elements of \(\mathcal E_{\vec{k}}\) also as various ``extensions'' of the complete transfer systems \(\cO_{k_1}^{cpt}\),\dots \(\cO_{k_n}^{cpt}\). 
For our count, it is easier to instead consider a more general class.
\begin{definition}
    An {\emph{extension of \(\cO'\in T_m\) by \(\cO''\in T_{k}\)}} is a transfer system \(\cO\in T_{m+k}\) such that
    \begin{enumerate}
        \item \(i_m^\ast \cO=\cO'\),
        \item \(\Phi^m \cO=\cO''\), and
    \end{enumerate}
    
    an extension \(\cO\) of \(\cO'\) by \(\cO''\) is  {\emph{core-preserving}} if moreover:
    \[
        \core{\cO}=\core{\cO'}\oplus \core{\cO''}.
    \]
\end{definition}

Note that since by assumption we have specified \(i_m^\ast\) and \(\Phi^m\) in an extension, we need only determine the transfers with source \(i\leq m\) and target \(j>m\).

\begin{definition}
    Any transfer \(i\to j\) with \(i\leq m\) and \(j>m\) in an extension of \(\cO'\in T_m\) by \(\cO''\) is a {\emph{crossing-transfer}}.
\end{definition}

\begin{proposition}
    Let \(\cO\) be an extension of \(\cO'\in T_m\) by \(\cO''\in T_{k}\). Then following are equivalent:
    \begin{enumerate}
        \item The extension is core-preserving.
        \item If there is a transfer \(m\to j\), then \(j=m\).
    \end{enumerate}
\end{proposition}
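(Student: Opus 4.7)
The plan is to reduce both conditions to a single statement about a single transfer, namely whether $m\to(m{+}1)$ lies in $\cO$.

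First I would observe that $\core$ is determined entirely by which adjacent pairs $i\to(i{+}1)$ belong to the transfer system in question. Using the hypotheses $i_m^{\ast}\cO=\cO'$ and $\Phi^m\cO=\cO''$, the adjacent transfers of $\cO$ with both endpoints in $[1,m]$ are exactly those of $\cO'$, and the adjacent transfers with both endpoints in $\{m{+}1,\dots,m{+}k\}$ are, after the shift by $m$, exactly those of $\cO''$. Thus $\core(\cO)$ and $\core(\cO')\oplus\core(\cO'')$ agree at every adjacent pair except possibly the single boundary pair $m\to(m{+}1)$: the direct sum, by construction, never contains a transfer crossing the boundary, while $\core(\cO)$ contains $m\to(m{+}1)$ iff $\cO$ does. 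Consequently condition (1) is equivalent to the single statement that $m\not\to(m{+}1)$ in $\cO$.

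With that reformulation in hand, the two implications are immediate. For (2)$\Rightarrow$(1), if every transfer out of $m$ has target $m$ itself, then in particular $m\not\to(m{+}1)$, giving (1). For (1)$\Rightarrow$(2), suppose that $m\not\to(m{+}1)$ but that some transfer $m\to j$ with $j>m$ exists. Apply the restriction condition of Definition~1.3 with $i=m$ and intermediate index $m{+}1$, which satisfies $m\leq m{+}1\leq j$; this forces $m\to(m{+}1)$, a contradiction. Hence every transfer $m\to j$ has $j\leq m$, and since transfer systems are weak subposets we also have $j\geq m$, giving $j=m$.

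There is no real obstacle here; the argument is purely bookkeeping. The only point that deserves to be called out explicitly is that $\core$ commutes with the restrictions $i_m^{\ast}$ and $\Phi^m$, which is what reduces the equivalence to the presence or absence of the one boundary-crossing transfer $m\to(m{+}1)$.
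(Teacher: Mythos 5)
Your proof is correct and follows essentially the same route as the paper: both arguments use the restriction axiom to reduce everything to the presence or absence of the single boundary transfer \(m\to(m+1)\), and then compare \(\core(\cO)\) with \(\core(\cO')\oplus\core(\cO'')\), which can only differ at that adjacent pair since the core is generated by adjacent transfers. Your phrasing via generating pairs and the paper's phrasing via merging of complete summands are the same observation.
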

\begin{proof}
  Note that the existence of a nontrivial transfer \(m\to j\) is equivalent to the existence of a transfer \(m\to (m+1)\), by the restriction axiom. If we have
  \[
    \core{\cO'}=\cO_{n_1}^{cpt}\oplus\dots\oplus\cO_{n_j}^{cpt}\text{ and }\core{\cO''}=\cO_{m_1}^{cpt}\oplus\dots\cO_{m_i}^{cpt},
  \]
  then by construction of the core, the existence of the transfer \(m\to (m+1)\) is equivalent to the core of \(\cO\) be
  \[
    \core{\cO}=\cO_{n_1}^{cpt}\oplus\dots\oplus \cO_{n_{j-1}}^{cpt}\oplus\cO_{n_j+m_1}^{cpt}\oplus \cO_{m_2}\oplus\dots\oplus \cO_{m_i}.
  \]
  The result follows.
\end{proof}

Since we want to enumerate transfer systems with a fixed core, we now restriction attention to core-preserving extensions of \(\cO\) by a complete transfer system \(\cO'\). This significantly simplifies our combinatorics.

\begin{lemma}\label{lem: Crossing Transfers}
    Let \(\cO\) be a core-preserving extension of \(\cO'\in T_m\) by \(\cO_{k}^{cpt}\). Then for each \(i\leq m\), the following are equivalent
    \begin{enumerate}
        \item We have a crossing transfer \(i\to m+j\) for some \(j>0\).
        \item We have crossing transfers \(i\to m+j\) for all \(0\leq j\leq k\).
    \end{enumerate}
\end{lemma}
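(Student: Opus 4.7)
The direction (2) $\Rightarrow$ (1) is immediate: (2) in particular asserts the crossing transfer $i\to m+k$ (or $i\to m+1$), which gives (1) with $j=k$ (or $j=1$). The content is in the converse, where the strategy is to use the completeness of $\cO''=\Phi^m\cO=\cO_k^{cpt}$ to propagate the single crossing transfer up to $m+k$, and then use the restriction axiom to fill in everything below.

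For (1) $\Rightarrow$ (2), suppose $i\to m+j$ in $\cO$ for some $j>0$. Since this is a transfer in a system on $[m+k]$, we have $j\le k$. Because $\Phi^m\cO=\cO_k^{cpt}$, the relation $j\to k$ holds in $\cO_k^{cpt}$, which by definition of $\Phi^m$ means $(m+j)\to(m+k)$ in $\cO$. Transitivity of the partial order $\to$ then gives the single transfer $i\to (m+k)$. Applying the restriction axiom to this transfer, for each $0\le \ell\le k$ we have $i\le m+\ell\le m+k$, so $i\to(m+\ell)$ in $\cO$. For $\ell\ge 1$ these are crossing transfers and yield (2); the $\ell=0$ case is the transfer $i\to m$ already present in $\cO'=i_m^\ast\cO$.

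There is no real obstacle here: once one notices that completeness of $\cO_k^{cpt}$ supplies $(m+j)\to(m+k)$, transitivity and restriction do all the remaining work. Note that the core-preserving hypothesis is not actually used in this argument; it is part of the running setup for the enumeration in this subsection, but the lemma itself holds for any extension of $\cO'$ by a complete system.
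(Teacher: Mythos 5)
Your proof is correct and uses the same ingredients as the paper's — the restriction axiom, the completeness of \(\Phi^m\cO=\cO_k^{cpt}\), and transitivity — merely applied in the opposite order (you propagate up to \(m+k\) first and then restrict down, while the paper restricts down to \(m+1\) first and then propagates up). Your side remark that the core-preserving hypothesis is not needed is also accurate.
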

\begin{proof}
    One direction is immediate. For the other, if we have a transfer \(i\to m+j\), then by the restriction axiom, we have transfers \(i\to m\) and \(i\to m+1\). Since \(\cO_{k}^{cpt}\) is complete, in our extension, we have transfers \(m+1\to m+j\) for any \(1\leq j\leq k\), which gives the second result.
\end{proof}

\begin{remark}
    We singled out the transfer \(i\to m\) since this constrains the number of possible sources for a transfer from \([m]\) up to \([k]\). Any crossing transfer has source an element of \([m]\) that transfers up to \(m\) in \(\cO'\).
\end{remark}

Moreover, however, we have a kind of ``non-decreasing'' property.

\begin{lemma}\label{lem: Crossing Nondecreasing}
    Let \(\cO\) be a core-preserving extension of \(\cO'\in T_m\) by \(\cO_{k}^{cpt}\), and let 
    \[
        \{d_1<d_2<\dots<d_r<d_{r+1}=m\mid d_i\to m\}
    \]
    be the set of elements of \([m]\) which transfer to \(m\) in \(\cO'\). In \(\cO\), if we have a transfer 
    \[
        d_j\to m+k,
    \]
    then for all \(1\leq i\leq j\), we have transfers
    \[
        d_i\to m+k.
    \]
\end{lemma}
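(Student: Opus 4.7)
The plan is to reduce the statement to two elementary observations: first, that the set $\{d_1 < d_2 < \cdots < d_{r+1} = m\}$ carries a natural chain structure inside $\cO'$ itself (namely $d_i \to d_j$ whenever $i \leq j$); and second, that this chain, combined with the given crossing transfer $d_j \to m+k$, yields the desired relations by transitivity.

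For the first step, I would apply the restriction axiom inside $\cO'$ to the transfer $d_i \toO{\cO'} m$, which holds by the defining property of the $d_i$. Restricting along the element $d_j \leq m$ gives
\[
    \min(d_i, d_j) \toO{\cO'} d_j.
\]
Since $i \leq j$, we have $d_i \leq d_j$, so the left-hand side is $d_i$, and hence $d_i \toO{\cO'} d_j$. Because $\cO$ is an extension with $i_m^\ast \cO = \cO'$, this relation transports to $\cO$: we have $d_i \toO{\cO} d_j$.

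For the second step, I would combine this with the hypothesis $d_j \toO{\cO} m+k$ using the transitivity of the partial order $\to$ on $\cO$. This gives $d_i \toO{\cO} m+k$ for every $1 \leq i \leq j$, completing the proof.

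I do not expect a serious obstacle here; the content is really just careful bookkeeping of which transfer system each relation lives in. The only conceptual point worth flagging is that the elements $d_1, \dots, d_{r+1}$ automatically form a totally ordered sub-diagram under $\to$ inside $\cO'$, a free consequence of their all transferring to $m$ together with the restriction axiom — this is what powers the chain on which transitivity then acts.
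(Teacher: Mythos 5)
Your argument is correct and is essentially the paper's own proof: the restriction axiom applied to $d_i \to m$ gives $d_i \to d_j$ for $i \leq j$, and transitivity with $d_j \to m+k$ finishes it. You simply spell out the bookkeeping (which transfer system each relation lives in) that the paper leaves implicit.
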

\begin{proof}
    By the restriction axiom, whenever \(i\leq j\), we have a transfer \(d_i\to d_j\). The result follows from transitivity.
\end{proof}

\begin{definition}
    Let \(\cO'\in T_m\), and let 
    \[
        \{d_1<d_2<\dots<d_r<d_{r+1}=m\mid d_i\to m\}
    \]
    be the set of elements of \([m]\) which transfer to \(m\) in \(\cO'\). For each \(k>0\) and for each \(0\leq \ell\leq r\), define a relation \(\to_{\ell}\) on \([m+k]\) that refines the partial order \(\leq\) by saying
    \begin{enumerate}
        \item if \(i\leq j\leq m\), then \(i\to_{\ell} j\) if and only if \(i\to j\) in \(\cO'\),
        \item if \(m<i\leq j\), then \(i\to_{\ell} j\) if and only if \(i\leq j\), and 
        \item if \(i\leq m <j\), then \(i\to_{\ell} j\) if and only if \(i=d_s\) for some \(1\leq s\leq r-\ell\).
    \end{enumerate}
\end{definition}

\begin{proposition}\label{prop: Building Extensions}
    The relation \(\to_{\ell}\) is a transfer system on \([n+k]\) that is a core-preserving extension of \(\cO'\) by \(\cO_{k}^{cpt}\).
\end{proposition}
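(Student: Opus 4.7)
The plan is to verify in turn: (i) that $\to_{\ell}$ is a partial order refining $\leq$ on $[m+k]$; (ii) that it satisfies the restriction axiom, so is a transfer system; (iii) that $i_m^{\ast}(\to_{\ell}) = \cO'$ and $\Phi^{m}(\to_{\ell}) = \cO_k^{cpt}$, so it is an extension of $\cO'$ by $\cO_k^{cpt}$; and (iv) that no transfer $m \to_{\ell} m+1$ occurs, which by the preceding proposition is equivalent to core-preservation.

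Reflexivity and antisymmetry are immediate, since $\to_{\ell}$ refines $\leq$ and inherits reflexivity on $[m]$ from $\cO'$ and on $\{m+1, \dots, m+k\}$ from $\leq$. The substantive content is transitivity: given $a \to_{\ell} b \to_{\ell} c$, I would split into cases by the positions of $a,b,c$ relative to $m$. The cases $a,b,c \leq m$ and $m < a$ reduce at once to transitivity in $\cO'$ and in $\leq$ respectively. When $a \leq m$ and $m < b \leq c$, the data $a = d_s$ with $s \leq r - \ell$ already witnesses $a \to_{\ell} c$ by clause (3). The only case that uses real information is $a \leq b \leq m < c$: here $a \to b$ in $\cO'$ and $b = d_s$ with $s \leq r-\ell$, so since $d_s \to m$ in $\cO'$ transitivity yields $a \to m$ in $\cO'$, forcing $a = d_t$ for some $t$; the inequality $a \leq d_s$ then gives $t \leq s \leq r - \ell$, hence $a \to_{\ell} c$.

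The restriction axiom is handled by the same trichotomy. For $b \leq m$ it is the restriction axiom for $\cO'$; for $a > m$ it is automatic since $\to_{\ell}$ agrees with $\leq$ above $m$. When $a \leq m < b$ and $a = d_s$ with $s \leq r - \ell$, any intermediate $k'$ with $a \leq k' \leq b$ satisfies $a \to_{\ell} k'$: if $k' \leq m$ I apply the restriction axiom in $\cO'$ to $d_s \to m$; if $k' > m$ the same index $s$ witnesses $a \to_{\ell} k'$ through clause (3). The identifications $i_m^{\ast}(\to_{\ell}) = \cO'$ and $\Phi^{m}(\to_{\ell}) = \cO_k^{cpt}$ are then immediate from clauses (1) and (2) of the definition. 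Finally, $m = d_{r+1}$, and the index $r+1$ lies outside the range $1 \leq s \leq r - \ell$ for every $\ell \geq 0$, so clause (3) forbids any transfer $m \to_{\ell} m+j$ with $j > 0$; the preceding proposition then gives core-preservation. I expect the main obstacle to be the transitivity case $a \leq b \leq m < c$, which is the only place where the ordered labeling of the $d_s$ is actually used.
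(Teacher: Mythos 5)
Your proof is correct and complete; the paper actually omits the proof of this proposition entirely (treating the verification as routine), so your write-up supplies exactly the missing argument. You correctly isolate the one nontrivial point, the transitivity case \(a\leq b\leq m< c\), where the fact that \(a\to m\) in \(\cO'\) forces \(a=d_t\) with \(t\leq s\leq r-\ell\), and your observation that \(m=d_{r+1}\) lies outside the range \(1\leq s\leq r-\ell\) is precisely why the extension is core-preserving via the preceding equivalence.
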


\begin{definition}
    Let \(\cO_{\cO',k}(\ell)\) denote the transfer system \(\to_\ell\) on \([n+k]\).
\end{definition}

\begin{remark}
    There is a special case of the extensions: \(\ell=r\). In this case, we have the direct sum \(\cO'\oplus \cO_{k}^{cpt}\).
\end{remark}

There is a crucial observation about the number of transfers to \([m+k]\) here.

\begin{proposition}\label{prop: Transfers in an Extension}
    Let \(\cO'\) be a transfer system on \([m]\) in which \(r+1\) elements transfer up to \(m\), and let \(0\leq \ell\leq r\). Then in \(\cO_{\cO',k}(\ell)\), we have \(k+r-\ell\) elements which transfer up to \(m+k\).
\end{proposition}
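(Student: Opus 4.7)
The proof is essentially immediate from unpacking the definition of $\cO_{\cO',k}(\ell)$, so my plan is to split the count of elements of $[m+k]$ that transfer up to $m+k$ according to which of the three defining clauses governs them, and tally.

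First I would handle the elements $i$ with $m < i \leq m+k$. By clause (2) in the definition of $\to_\ell$, we have $i \to_\ell j$ for all $i \leq j$ in this range, so in particular $i \to_\ell (m+k)$ for each such $i$. This contributes exactly $k$ elements.

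Next I would handle the elements $i \leq m$. These are governed by clause (3): $i \to_\ell (m+k)$ if and only if $i = d_s$ for some $1 \leq s \leq r-\ell$. Since the $d_s$ are distinct, this contributes exactly $r-\ell$ elements. The one subtlety worth flagging is the edge case $i = m = d_{r+1}$: although $m$ does transfer to $m$ in $\cO'$, the index $r+1$ falls outside the range $1 \leq s \leq r-\ell$, so $m$ itself does not transfer forward to $m+k$. This is consistent with the core-preserving hypothesis, which forbids a transfer $m \to (m+1)$.

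Adding the two contributions gives $k + (r - \ell)$, which is the claimed count. There is no real obstacle here; the only thing to be careful about is not double-counting $m$ or miscounting the range of the index $s$. If anything, the proposition could be stated as a corollary of Proposition~\ref{prop: Building Extensions}, since once one verifies that $\cO_{\cO',k}(\ell)$ is a transfer system, the count is read off directly from the defining conditions.
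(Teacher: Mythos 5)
Your count is correct and follows the same route as the paper's proof: the $k$ elements above $m$ all transfer up by completeness of $\cO_k^{cpt}$, and clause (3) contributes exactly the $r-\ell$ elements $d_1,\dots,d_{r-\ell}$ from $[m]$. Your remark that $m=d_{r+1}$ falls outside the index range $1\leq s\leq r-\ell$ is a worthwhile clarification of why $m$ itself does not cross over, consistent with the extension being core-preserving.
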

\begin{proof}
    All of the \(k\) elements of \([k]\) transfer up, and by construction, the \(r-\ell\) elements \(d_1,\dots, d_{r-\ell}\) are the only elements from \([m]\) which also transfer up to \(m+k\).
\end{proof}

Putting these together gives a complete classification of the core-preserving extensions.

\begin{theorem}
    Let \(\cO\in T_m\) be a transfer system, and let 
    \[
        \{d_1<d_2<\dots<d_r<d_{r+1}=m\mid d_i\to m\}
    \]
    be the set of elements of \([m]\) which transfer to \(m\) in \(\cO'\). Then there are \((r+1)\) core-preserving extensions of \(\cO\) by \(\cO_{k}^{cpt}\) given by \(\cO_{\cO',k}(\ell)\) for \(0\leq \ell\leq r\).
\end{theorem}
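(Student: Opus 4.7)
The plan is to argue the theorem by combining the earlier Proposition~\ref{prop: Building Extensions} (which already produces the $r+1$ candidate extensions) with uniqueness, reducing everything to understanding what the crossing transfers of an arbitrary core-preserving extension can be.

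First I would observe that in any extension $\cO$ of $\cO'$ by $\cO_k^{cpt}$, the transfers with both endpoints $\leq m$ are forced by $i_m^\ast\cO=\cO'$, and the transfers with both endpoints $>m$ are forced by $\Phi^m\cO=\cO_k^{cpt}$. So all the data lives in the crossing transfers, and it suffices to determine which pairs $(i,j)$ with $i\leq m<j$ give $i\to j$. By Lemma~\ref{lem: Crossing Transfers}, once a single crossing transfer $i\to m+j$ exists, automatically $i\to m+j'$ holds for every $0\leq j'\leq k$; thus the crossing transfer data is completely encoded by the \emph{set of sources} $S\subseteq[m]$ of crossing transfers.

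Next I would pin down which subsets $S$ can occur. Any $i\in S$ satisfies $i\to m$ in $\cO'$, so $S\subseteq\{d_1,\dots,d_{r+1}\}$. The core-preserving hypothesis rules out $m=d_{r+1}\in S$, because a crossing transfer from $m$ would produce $m\to m+1$ and hence a larger core block. So $S\subseteq\{d_1,\dots,d_r\}$. Finally, Lemma~\ref{lem: Crossing Nondecreasing} shows that $S$ is downward closed in this indexed list: if $d_j\in S$, then $d_i\in S$ for every $i\leq j$. Therefore $S=\{d_1,\dots,d_{r-\ell}\}$ for some unique $0\leq \ell\leq r$ (with the convention that $\ell=r$ gives $S=\emptyset$), and this is exactly the source set prescribed by $\cO_{\cO',k}(\ell)$.

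To finish I would combine: Proposition~\ref{prop: Building Extensions} says each $\cO_{\cO',k}(\ell)$ is indeed a core-preserving extension, so existence of all $r+1$ systems is in hand; the above uniqueness argument shows every core-preserving extension of $\cO'$ by $\cO_k^{cpt}$ coincides with exactly one of them; and Proposition~\ref{prop: Transfers in an Extension} shows the $r+1$ candidates are pairwise distinct, since they have different numbers of elements transferring up to $m+k$. I do not anticipate a serious obstacle here, as the two Lemmas \ref{lem: Crossing Transfers} and \ref{lem: Crossing Nondecreasing} already do essentially all the work; the only subtle point is noting explicitly that the core-preservation hypothesis is what forbids the ``extra'' choice $d_{r+1}=m$ as a source and thus gives $r+1$ rather than $r+2$ possibilities.
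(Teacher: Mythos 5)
Your proof is correct and follows essentially the same route as the paper, which also deduces uniqueness from Lemma~\ref{lem: Crossing Transfers} and Lemma~\ref{lem: Crossing Nondecreasing} and existence from Proposition~\ref{prop: Building Extensions}. You simply spell out the details the paper leaves implicit, in particular why core-preservation excludes $d_{r+1}=m$ as a source and why the $r+1$ extensions are pairwise distinct.
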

\begin{proof}
    Lemma~\ref{lem: Crossing Transfers} and Lemma~\ref{lem: Crossing Nondecreasing} show that any core-preserving extension has this form. The converse is the content of Proposition~\ref{prop: Building Extensions}.
\end{proof}

\subsection{Enumerating transfer systems with a fixed core}

Now let \(\cO\) be a transfer system with 
\[
    \core{\cO}=\cO_{\vec{k}}^{sat}. 
\]
Write \(\vec{k}=(k_1,\dots,k_{n})\). We can immediately identify \(\cO\) inductively as a type considered in the previous section. 
\begin{proposition}
    The transfer system \(\cO\) is a core-preserving extension of \(i_{K_{n-1}}^\ast\cO\) by \(\cO_{k_n}^{cpt}\).
\end{proposition}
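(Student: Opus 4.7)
The plan is to check the three defining conditions of a core-preserving extension directly from the hypothesis $\core{\cO}=\cO_{k_1}^{cpt}\oplus\dots\oplus\cO_{k_n}^{cpt}$. First, since $\cO$ is a transfer system on $[K_n]$ and $K_n=K_{n-1}+k_n$, both $i_{K_{n-1}}^\ast\cO$ and $\Phi^{K_{n-1}}\cO$ are well-defined transfer systems on $[K_{n-1}]$ and $[k_n]$ respectively. The first condition $i_{K_{n-1}}^\ast(\cO)=i_{K_{n-1}}^\ast(\cO)$ is tautological.

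Next I would verify that $\Phi^{K_{n-1}}\cO=\cO_{k_n}^{cpt}$. Since $\core{\cO}\leq \cO$ and the final summand of the core is $\cO_{k_n}^{cpt}$, every relation $(K_{n-1}+i)\to(K_{n-1}+j)$ coming from this complete block lives in $\cO$. Translating back by $K_{n-1}$ this says $\Phi^{K_{n-1}}\cO$ contains all relations of $\cO_{k_n}^{cpt}$, and since $\cO_{k_n}^{cpt}$ is the maximum element of $T_{k_n}$, we get equality.

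For the core-preserving condition I must show $\core{\cO}=\core{i_{K_{n-1}}^\ast\cO}\oplus\core{\Phi^{K_{n-1}}\cO}$. The right-hand factor is just $\cO_{k_n}^{cpt}$, since the complete transfer system is saturated and hence equal to its own core. For the left-hand factor, the key observation is that taking the core commutes with restriction along an interval inclusion: by definition, $\core{i_{K_{n-1}}^\ast\cO}$ is generated by the covering relations $i\to(i+1)$ with $i\leq K_{n-1}-1$ that lie in $i_{K_{n-1}}^\ast\cO$, and these are exactly the covering relations with $i\leq K_{n-1}-1$ that lie in $\cO$. Using the hypothesized direct sum decomposition of $\core{\cO}$, the set of such covering indices is $\{1,\dots,K_{n-1}-1\}\setminus\{K_1,\dots,K_{n-2}\}$, and the partial order they generate is exactly $\cO_{k_1}^{cpt}\oplus\dots\oplus\cO_{k_{n-1}}^{cpt}$. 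Combining the two factors gives the claimed decomposition of $\core{\cO}$.

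The main obstacle is conceptual rather than computational: one should make sure that the definitional equality $\core{i_{K_{n-1}}^\ast\cO}=i_{K_{n-1}}^\ast(\core{\cO})$ is valid, but this is immediate since $\iota\colon[K_{n-1}]\to[K_n]$ is an interval inclusion, so the set of covering relations of $\cO$ restricted to $[K_{n-1}]$ matches the set of covering relations of the restricted system. Everything else is a direct reading of the definitions.
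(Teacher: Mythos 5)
Your proof is correct: the paper states this proposition without proof, treating it as immediate from the definitions, and your argument is exactly the direct verification the authors leave implicit --- the tautological first condition, $\Phi^{K_{n-1}}\cO=\cO_{k_n}^{cpt}$ from $\core{\cO}\leq\cO$ plus maximality of the complete system, and the core computation via the covering relations below $K_{n-1}$. No gaps.
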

This turns our problem into an inductive one, working down on the number of summands in the partition of \(k\). We can now build our bijection.

\begin{definition}
    Let 
    \[
        \sigma\colon \mathcal E_{\vec{k}}\to\mathcal S_{\vec{k}}
    \]
    be defined inductively by the following procedure. 
    If \(\vec{k}=(k)\), then \(\cO=\cO_k^{cpt}\), and we define
    \[
        \sigma(\cO_{k}^{cpt})=k.
    \]
    
    For a general \(\vec{k}=(k_1,\dots,k_n)\) with \(n>1\) and \(\cO\in\mathcal E_{\vec{k}}\), let \(\ell_{n-1}\) be the unique number \(0\leq \ell_{n-1}\) such that
    \[
        \cO=\cO_{\cO',k_n}(\ell_{n-1}),
    \]
    where \(\cO'=i_K^\ast \cO\), and define
    \[
        \sigma(\cO)=\sigma(\cO'),\underbrace{0,\dots,0}_{\ell_{n-1}}, k_n.
    \]
\end{definition}

\begin{example}
    For \(\cO\) the transfer system in Figure~\ref{fig:Restriction and GFP}, we have
    \[
        \sigma\cO=(4,0,1,2).
    \]
\end{example}

We need to verify that \(\sigma\) actually lands in the set \(\mathcal S_{\vec{k}}\).

\begin{proposition}
    For any \(\cO\) with \(\core{\cO}=\vec{k}\), we have
    \begin{enumerate}
        \item \(\sigma(\cO)\in\mathcal S_{\vec{k}}\), and
        \item \(e\big(\sigma(\cO)\big)\) is the number of elements \(j\) between \(1\) and \(K_n-1\) that transfer up to \(K_n\) in \(\cO\).
    \end{enumerate}
\end{proposition}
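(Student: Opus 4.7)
The plan is to induct on the length $n$ of the composition $\vec{k} = (k_1,\dots,k_n)$, matching the inductive definition of $\sigma$. The base case $n=1$ is direct: $\cO = \cO_k^{cpt}$ gives $\sigma(\cO) = (k)$, which trivially satisfies the Catalan tuple axioms (since $k\geq 1$) and has core $(k)$; its excess is $k-1$, which agrees with the fact that every one of the $k-1$ elements $1,\dots,k-1$ transfers up to $k$ in the complete system.

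For the inductive step, assume the result for all cores of length $n-1$, and let $\cO \in \mathcal E_{\vec{k}}$ with $\vec{k}=(k_1,\dots,k_n)$. The proposition preceding the statement identifies $\cO$ as a core-preserving extension of $\cO' := i_{K_{n-1}}^\ast \cO$ by $\cO_{k_n}^{cpt}$, and by the classification of such extensions, there is a unique $0 \leq \ell_{n-1} \leq r$ with $\cO = \cO_{\cO',k_n}(\ell_{n-1})$, where $r+1$ is the number of elements of $[K_{n-1}]$ transferring up to $K_{n-1}$ in $\cO'$. Since $\core(\cO') = \cO_{(k_1,\dots,k_{n-1})}^{sat}$, the inductive hypothesis tells us that $\sigma(\cO') \in \mathcal S_{(k_1,\dots,k_{n-1})}$ and that $e(\sigma(\cO'))$ equals the number of elements in $\{1,\dots,K_{n-1}-1\}$ transferring up to $K_{n-1}$, namely $r$.

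Now I would apply Proposition~\ref{prop: Extending Catalan Tuples}: appending $\ell_{n-1}$ zeros followed by $k_n$ to the Catalan tuple $\sigma(\cO')$ produces a Catalan tuple precisely when $0 \leq \ell_{n-1} \leq e(\sigma(\cO')) = r$, which is exactly the range guaranteed by the classification of extensions. Thus $\sigma(\cO)$ is a Catalan tuple, and its core is obtained from that of $\sigma(\cO')$ by appending $k_n$, yielding $\vec{k}$; this gives part (1). For part (2), Proposition~\ref{prop: Excess of Catalan Extensions} computes
\[
    e(\sigma(\cO)) = (k_n-1) + e(\sigma(\cO')) - \ell_{n-1} = k_n - 1 + r - \ell_{n-1}.
\]
On the other hand, Proposition~\ref{prop: Transfers in an Extension} says that $\cO_{\cO',k_n}(\ell_{n-1})$ has $k_n + r - \ell_{n-1}$ elements transferring up to $K_n$ (counting $K_n$ itself among these). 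Discarding $K_n$ gives exactly $k_n + r - \ell_{n-1} - 1$ elements of $\{1,\dots,K_n-1\}$ transferring up to $K_n$, matching the excess.

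The only real subtlety is bookkeeping: being careful about the off-by-one between $r$ (used for the classification of extensions) and $e(\sigma(\cO'))$ (from the inductive hypothesis), and likewise between the total count $k_n+r-\ell_{n-1}$ of transfers up to $K_n$ (which includes the trivial transfer) and the excess statement, which excludes $K_n$ itself. Once these conventions are aligned, both equalities drop out of the two auxiliary propositions on Catalan tuple extensions.
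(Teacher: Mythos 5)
Your proof is correct and follows essentially the same route as the paper: induction on the length of $\vec{k}$, identifying $\cO$ as the core-preserving extension $\cO_{\cO',k_n}(\ell_{n-1})$ and then invoking Propositions~\ref{prop: Extending Catalan Tuples}, \ref{prop: Excess of Catalan Extensions}, and \ref{prop: Transfers in an Extension}. If anything, your explicit handling of the off-by-one between the count of elements transferring up to $K_n$ (which includes $K_n$ itself) and the excess is slightly more careful than the paper's wording.
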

\begin{proof}
    We show this by induction on \(n\). The base case of \(n=1\) is immediate by the definitions of \(\sigma\) and the excess, so assume this is true for partitions with fewer than \(n\) terms. 
    
    Let \(\cO'=i_{K_{n-1}}^\ast\cO\), and let \(r\) be the number of \(j<K_{n-1}\) which transfer up to \(K_{n-1}\) in \(\cO'\). By the inductive hypothesis, \(\sigma\cO'\) is a Catalan tuple with core \((k_1,\dots,k_{n-1})\) and we also have
    \[
        e\big(\sigma\cO'\big)=r.
    \]
    Now if \(0\leq \ell_n\leq r\) is such that \(\cO=\cO_{\cO',k_n}(\ell_{n-1})\), then since \(\ell_{n-1}\leq e\big(\sigma\cO'\big)\), 
    %the sequence
    %\[
    %    \sigma(\cO)=\sigma(\cO')\underbrace{0,\dots,0}_s,k_n
    %\]
    %is a Catalan tuple by 
    the first claim is Proposition~\ref{prop: Extending Catalan Tuples}. For the second part, Proposition~\ref{prop: Excess of Catalan Extensions} shows that the excess of \(\sigma\cO\) is 
    \[
        e(\sigma\cO)=e-\ell_{n-1}+(k_{n}-1).
    \]
    Proposition~\ref{prop: Transfers in an Extension} shows this quantity is exactly the number of elements smaller than \(K_n\) which transfer up to \(K_n\).
\end{proof}

This gives us the final piece for our argument.

\begin{corollary}
    The map \(\sigma\) is a bijection \(\mathcal E_{\vec{k}}\to\mathcal S_{\vec{k}}\).
\end{corollary}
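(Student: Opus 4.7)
The plan is to prove this corollary by induction on the number $n$ of parts of $\vec{k}$, by constructing an explicit inverse $\tau\colon\mathcal S_{\vec{k}}\to\mathcal E_{\vec{k}}$ that mirrors the recursive definition of $\sigma$. The base case $n=1$ is immediate, since both $\mathcal E_{(k)}$ and $\mathcal S_{(k)}$ are singletons, and $\sigma$ identifies them.

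For the inductive step, let $\vec{k}=(k_1,\dots,k_n)$ and take $\vec{s}\in\mathcal S_{\vec{k}}$. By the core condition, $\vec{s}$ has the shape
\[
\vec{s}=(k_1,\underbrace{0,\dots,0}_{\ell_1},k_2,\dots,k_{n-1},\underbrace{0,\dots,0}_{\ell_{n-1}},k_n).
\]
I would define $\tau$ by stripping the trailing block $(0,\dots,0,k_n)$ to obtain a shorter sequence $\vec{s}'$, then applying the inductive hypothesis to produce $\cO'=\tau(\vec{s}')\in\mathcal E_{(k_1,\dots,k_{n-1})}$, and finally setting $\tau(\vec{s}):=\cO_{\cO',k_n}(\ell_{n-1})$. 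That $\vec{s}'$ is itself a Catalan tuple with core $(k_1,\dots,k_{n-1})$ follows because the partial-sum inequalities are inherited from $\vec{s}$, the total sum now equals $K_{n-1}$, and the last entry is the positive integer $k_{n-1}$.

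The one substantive check is that $\ell_{n-1}$ falls in the admissible range $0\le \ell_{n-1}\le r$ needed for $\cO_{\cO',k_n}(\ell_{n-1})$ to be defined, where $r+1$ is the number of elements of $[K_{n-1}]$ transferring up to $K_{n-1}$ in $\cO'$. This is precisely where the preceding proposition does its work: by induction $e(\vec{s}')=r$, while the fact that $\vec{s}$ is a Catalan tuple extending $\vec{s}'$ by $(0^{\ell_{n-1}},k_n)$ forces $\ell_{n-1}\le e(\vec{s}')$ by Proposition~\ref{prop: Extending Catalan Tuples}. The two bounds coincide, so $\tau$ is well-defined, and by construction $\cO_{\cO',k_n}(\ell_{n-1})$ is a core-preserving extension landing in $\mathcal E_{\vec{k}}$.

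Finally, $\sigma$ and $\tau$ are mutual inverses by unwinding the two recursions in parallel: $\sigma(\tau(\vec{s}))$ peels off $(0^{\ell_{n-1}},k_n)$ exactly as $\tau$ glued it on, and $\tau(\sigma(\cO))$ recovers $\cO$ because the classification theorem of core-preserving extensions of $i_{K_{n-1}}^\ast\cO$ by $\cO_{k_n}^{cpt}$ pins down the parameter $\ell_{n-1}$ uniquely. I do not anticipate any real obstacle once the range-matching above is in hand; the bulk of the work has already been done in the classification of core-preserving extensions and in the excess computation for $\sigma$.
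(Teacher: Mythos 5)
Your proposal is correct and follows essentially the same route as the paper: induction on the number of parts of \(\vec{k}\), matching the \(r+1\) core-preserving extensions \(\cO_{\cO',k_n}(\ell)\) against the \(e(\vec{s}')+1=r+1\) admissible Catalan-tuple extensions via the excess computation and Proposition~\ref{prop: Extending Catalan Tuples}. The paper phrases this as a counting bijection while you make the inverse \(\tau\) explicit, but the underlying argument is the same.
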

\begin{proof}
    By induction on \(n\), we see that there are exactly as many extensions of \(i_{K_{n-1}}^\ast\cO\) by \(\cO_{k_n}\) as there are extensions of the Catalan tuple \(\sigma(i_{K_{n-1}}^\ast\cO)\) to a Catalan tuple ending with \(k_n\), and the map \(\sigma\) gives a bijection between these.
\end{proof}

\bibliographystyle{abbrv}
\bibliography{refs}

\begin{thebibliography}{1}

\bibitem{BBR}
S.~Balchin, D.~Barnes, and C.~Roitzheim.
\newblock N{$_\infty$}-operads and associahedra.
\newblock {\em Pacific J. Math.}, 315(2):285--304, 2021.

\bibitem{BHNinfty}
A.~J. Blumberg and M.~A. Hill.
\newblock Operadic multiplications in equivariant spectra, norms, and
  transfers.
\newblock {\em Adv. Math.}, 285:658--708, 2015.

\bibitem{BHBiIncomplete}
A.~J. Blumberg and M.~A. Hill.
\newblock Bi-incomplete {T}ambara functors.
\newblock In {\em Equivariant topology and derived algebra}, volume 474 of {\em
  London Math. Soc. Lecture Note Ser.}, pages 276--313. Cambridge Univ. Press,
  Cambridge, 2022.

\bibitem{Chan}
D.~Chan.
\newblock Bi-incomplete {T}ambara functors as {$\mathcal{O}$}-commutative
  monoids.
\newblock arxiv.org: 2208.05555, 2022.

\bibitem{MR4407998}
J.~de~Jong, A.~Hock, and R.~Wulkenhaar.
\newblock Nested {C}atalan tables and a recurrence relation in noncommutative
  quantum field theory.
\newblock {\em Ann. Inst. Henri Poincar\'{e} D}, 9(1):47--72, 2022.

\bibitem{FOOQW}
E.~E. Franchere, K.~Ormsby, A.~M. Osorno, W.~Qin, and R.~Waugh.
\newblock Self-duality of the lattice of transfer systems via weak
  factorization systems.
\newblock {\em Homology Homotopy Appl.}, 24(2):115--134, 2022.

\bibitem{Mlotkowski2010}
W.~M{\l}otkowski.
\newblock Fuss-{C}atalan numbers in noncommutative probability.
\newblock {\em Doc. Math.}, 15:939--955, 2010.

\bibitem{Rubin21}
J.~Rubin.
\newblock Detecting {S}teiner and linear isometries operads.
\newblock {\em Glasg. Math. J.}, 63(2):307--342, 2021.

\end{thebibliography}

\end{document}